\newtheorem{thm}{Theorem}
\newtheorem{prop}{Proposition}
\DeclareMathOperator*{\argmax}{argmax}
\begin{document}

\title{Precedence-Constrained Arborescences}

\author{Xiaochen Chou \and
Mauro Dell'Amico \and
Jafar Jamal \and
Roberto Montemanni\footnote{Corresponding author: roberto.montemanni@unimore.it}}

\date{University of Modena and Reggio Emilia, Italy}
\maketitle
\begin{abstract}
The minimum-cost arborescence problem is a well-studied problem in the area of graph theory, with known polynomial-time algorithms for solving it. Previous literature introduced new variations on the original problem with different objective function and/or constraints. Recently, the Precedence-Constrained Minimum-Cost Arborescence problem was proposed, in which precedence constraints are enforced on pairs of vertices. These constraints prevent the formation of directed paths that violate precedence relationships along the tree. We show that this problem is {{\sc NP}-hard}, and we introduce a new scalable mixed integer linear programming model for it. With respect to the previous models, the newly proposed model performs substantially better. This work also introduces a new variation on the minimum-cost arborescence problem with precedence constraints. {We show that this new variation is also {{\sc NP}-hard}, and} we propose several mixed integer linear programming models for formulating the problem.
\end{abstract}


\section{Introduction}
\label{sec-introduction}

The \textit{Minimum-Cost Arborescence problem} (MCA) is a well-known problem that consists in finding a directed minimum-cost spanning tree {rooted at some vertex $r$ called the root} in a directed graph. The first polynomial time algorithm for solving the problem was proposed independently by Yoeng-Jin Chu and Tseng-Hong Liu \cite{ref-chu}, and Jack Edmonds \cite{ref-edmonds}. The problem can be formally described as follows. A directed graph $G=(V,A)$ is given where {$V=\{1,\cdots ,n\}$} is the set of vertices, $r \in V$ is the root of the arborescence, and $A \subseteq V \times V$ is the set of arcs with a cost $c_a$ associated with every arc $a \in A$. The goal is to find a minimum-cost directed spanning tree in $G$ rooted at $r$, i.e. a set $T \subseteq A$ of $n - 1$ arcs, such that there is a unique directed path from $r$ to any other vertex $j \in V\setminus\{r\}$ in the subgraph induced by $T$. A different polynomial time algorithm for solving the MCA that operates directly on the cost matrix was discussed by Bock \cite{ref-bock}. 

Since the MCA was first proposed, different variations have been introduced such as the \textit{Resource-Constrained Minimum-Weight Arborescence problem} \cite{ref-Fischetti}, where finite resources are associated with each vertex in the input graph. The objective of the problem is to find an arborescence with minimum total cost where the sum of the costs of outgoing arcs from each vertex is at most equal to the resource of that vertex. The problem is categorized as {{\sc NP}-hard} as it generalizes the Knapsack problem \cite{ref-Fischetti}. The \textit{Capacitated Minimum Spanning Tree problem} \cite{ref-Gouveia} is another variation, where non-negative integer node demands $q_j$ is associated with each node $j \in  V \backslash \{r\}$, and an integer $Q$ is given. The objective is to find a minimum spanning tree rooted at $r$ such that the sum of the weights of the vertices in any subtree off the root is at most $Q$. The problem is shown to be {{\sc NP}-hard} as the particular case with zero cost arcs is a \textit{bin packing problem}  \cite{ref-Gouveia}. The \textit{$p$-Arborescence Star problem} \cite{ref-Pereira} is a relevant problem that is described as follows. Given a weighted directed graph $G =(V, A)$, a root vertex $r \in V$, and an integer $p$, the objective of the problem is to find a minimum-cost reverse arborescence rooted at $r$, such that the arborescence {spans} the set of vertices $H \subseteq V \backslash \{r\}$ of size $p$, and each vertex $v \in V\backslash \{H\cup r\}$ must be assigned to one of the vertices in $H$. The problem is {{\sc NP}-hard} \cite{ref-Morais} in the general case by a reduction from the \textit{$p$-median problem} \cite{ref-Daskin}. Frieze and Tkocz \cite{ref-Frieze} study the problem of finding a minimum-cost arborescence such that the cost of the arborescence is at most $c_0$. The problem is studied on randomly weighted digraphs where each arc in the graph has a weight $w$ and a cost $c$, each being an independent uniform random variable $U^s$ where $0 < s \leq 1$, and $U$ is uniform $[0,1]$. The problem is {{\sc NP}-hard} \cite{ref-Frieze} through a reduction from the knapsack problem. Another problem is the \textit{Maximum Colorful Arborescence problem} \cite{ref-Fertin} which can be described as the following. Given a weighted directed acyclic graph with each vertex having a specified color from a set of colors $C$, the objective is to find an arborescence of maximum weight, in which no color appears more than once. The problem is known to be {{\sc NP}-hard} \cite{ref-Bocker} even when all arcs have a weight of 1. \textit{The Constrained Arborescence Augmentation problem} \cite{ref-Li} is a different variation that can be described as follows. Given a weighted directed graph $G = (V, A)$, and an arborescence $T = (V, A_r)$ in $G$ rooted at vertex $r \in V$, the objective of the problem is to find an arc subset $A'$ from $A - A_r$ such that there still exists an arborescence in the new graph $G' = (V, A_r \cup A' - {a})$ for each arc $a \in A_r$, where the sum of {the weights of the arcs} in $A'$ is minimized. The problem is an extension on the \textit{augmentation problem} \cite{ref-Eswaran}, and is shown to be {\sc NP}-{hard} \cite{ref-Li}. The \textit{Minimum $k$ Arborescence with Bandwidth Constraints} \cite{ref-cai} is another variation, where every arc $a \in A$ has an integer bandwidth $b(a)$ that indicates the number of times such an arc can be used. The objective of the problem is to find $k$ arborescences of minimum-cost rooted at the $k$ given root vertices, covering every arc $a \in A$ at most $b(a)$ times. It has been shown that the problem can be solved in polynomial time \cite{ref-cai}. \textit{The Degree-Constrained Minimal Spanning Tree problem with unreliable links and node outage costs} \cite{ref-Kawatra} is modeled as a directed graph with the root vertex being the central node of a network, and all other vertices {being terminal nodes}. The problem {consists in} finding links in a network to connect a set of terminal nodes to a central node, while minimizing both link costs and node outage costs. Node outage cost is the economic cost incurred by the network user whenever that node is disabled due to failure of a link. The problem is shown to be {{\sc NP}-hard} by reducing the problem to an equivalent \textit{Traveling Salesman problem} \cite{ref-Garey}. The \textit{Minimum Changeover Cost Arborescence} \cite{ref-changeover} is another variation, where each arc is labeled with a color out of a set of $k$ available colors. A changeover cost is defined on every vertex $v$ in the arborescence other than the root. The cost over a vertex $v$ is paid for each outgoing arc from $v$ and depends on the color of its outgoing arcs, relative to the color of its incoming arc. The costs are given through a $k \times k$ matrix $C$, where each entry $C_{ab}$ specifies the cost to be paid at vertex $v$ when its incoming arc is colored $a$ and one of its outgoing arcs is colored $b$. A change over cost at vertex $v$ is calculated as the sum of costs paid for every outgoing arc at that vertex. The objective of the problem is to find an arborescence $T$ with minimum total change over cost for every vertex $j \in V$ other than the root. The problem is shown to be {\sc NP}-{hard} and very hard to approximate \cite{ref-changeover}. Finding a pair of arc-disjoint in-arborescence and out-arborescence is another problem, with the objective of finding a pair of arc-disjoint $r$-out-arborescence rooted at $r_1$ and $r$-in-arborescence rooted at $r_2$ where $r_1,r_2 \in V$. An $r$-out-arborescence has all its arcs directed away from the root, and an $r$-in-arborescence has all its arcs directed towards the root. The problem was studied by Bérczi et al. \cite{ref-arc-disjoint} where a linear-time algorithm for solving the problem in directed acyclic graphs is proposed. The problem is shown to be {\sc NP}-Complete in general graphs even if $r_1 = r_2$ \cite{ref-edge-disjoing-NP}. Yingshu et al. \cite{ref-broadcast} studied the problem of constructing a strongly connected broadcast arborescence with bounded transmission delay, where they devise a polynomial time algorithm for constructing a broadcast network with minimum energy consumption that respects the transmission delays of the broadcast tree simultaneously. {The \textit{Minimum Spanning Tree Problem with Conflict Pairs} is a variation of the minimum spanning tree problem where given an undirected graph and a set $S$ that contains \emph{conflicting pairs} of edges called a \textit{conflict pair}, the objective of the problem is to find a minimum-cost spanning tree that contains at most one edge from each conflict pair in $S$ \cite{ref-mstcp}. The problem is shown to be {\sc NP}-hard \cite{ref-mstcp-np}. The \textit{Least-Dependency Constrained Spanning Tree problem} \cite{ref-cmst} is another variation that can be defined as follows. Given a connected graph $G=(V,E)$ and a directed graph $D=(E,A)$ whose vertices are the edges of $G$, the directed graph $D$ is a \emph{dependency graph} for $E$, and $e_1 \in E$ is a dependency of $e_2 \in E$ if $(e_1,e_2) \in A$. The objective of the problem is to decide whether there is a spanning tree $T$ of $G$ such that each edge in $T$ has either an empty dependency or \emph{at least} one of its dependencies is also in $T$. The \textit{All-Dependency Constrained Spanning Tree problem} \cite{ref-cmst} is a similar problem that consists in deciding whether there is a spanning tree $T$ of $G$ such that each of its edges either has no dependency or {all of its dependencies}  are in $T$. The two problems are shown to be {\sc NP}-Complete \cite{ref-cmst}.}

The \textit{Precedence-Constrained Minimum-Cost Arborescence problem} \linebreak (PCMCA) was first introduced by Dell'Amico et al. \cite{ref-dellamico}, where a set of precedence constraints is included as follows. Given a set $R$ of ordered pairs of vertices, then for each precedence $(s,t)\in R$ any path of the arborescence covering both vertices $s$ and $t$ must visit $s$ before visiting $t$. The objective of the problem is to find an arborescence of minimum total cost that satisfies the precedence constraints. By definition of the PCMCA, we always assume that if $(s, t) \in R$ then $(t, s) \notin A$. The PCMCA has applications in infrastructure design such as designing a commodity distribution network. As an example, assume we have a commodity distribution network, where the distribution starts from a main vertex (root of the arborescence), and the distribution travels in a single direction away from the root to every other vertex in the graph. Such a structure follows the definition of an arborescence. Now assume that transit duties that are higher than the travel costs have to be paid by vertex $s$ in the graph, if the commodity passes through vertex $t$ on its way to vertex $s$. To avoid for such duties to be paid by vertex $s$, we can impose a precedence relationship between the vertex pair $s$ and $t$, i.e. $(s,t) \in R$. This will guarantee that no directed path from $t$ to $s$ will appear in the distribution network, and vertex $s$ can avoid paying the transit duties (see \cite{ref-dellamico} for more details).

A new variation on the MCA named the \textit{Precedence-Constrained \linebreak Minimum-Cost Arborescence problem with Waiting Times} (PCMCA-WT) is introduced in this work. The problem is an extension on the PCMCA characterized by an additional constraint. {Given a spanning arborescence rooted at vertex $r$, with arc costs indicating the time required to traverse an arc, assume there is a flow which starts at the root vertex $r$ and traverses each path of the arborescence. For each precedence $(s, t) \in R$, we must guarantee that the time at which the flow enters $s$ is smaller than or equal to the time at which the flow enters $t$.} As an example, assume that $(b,a),(c,a),(d,a) \in R$, and the flow {enters} vertex $b$ at time step 5, vertex $c$ at time step 10, and vertex $d$ at time step 15. Therefore, the flow must {enter} vertex $a$ at a time step greater than or equal to 15, and if the cost of the path from $r$ to $a$ is equal to 10, then this will result in a waiting time of 5 at vertex $a$. The objective of the problem is to find an arborescence $T$ that has a minimum total cost, plus total waiting times, where the flow never {enters} $t$ earlier than {entering} $s$ for all $(s, t) \in R$.

The contributions of this paper can be summarized as:
\begin{enumerate}
	\item Introducing a scalable and efficient integer linear programming model for the PCMCA.
	\item Introducing the PCMCA-WT as a new variation of the MCA.
	\item Proving that {both the PCMCA and the PCMCA-WT are {{\sc NP}-hard}}.
\end{enumerate} 

The rest of the paper is organized as follows. Section \ref{sec-pcmca} presents a proof of complexity and a new mixed integer linear programming model (MILP) for the PCMCA. Section \ref{sec-pcmrt} presents {a proof of complexity and} several mixed integer linear programming models for the PCMCA-WT. Section \ref{sec-results} discusses computational results, while some conclusions are summarized in Section \ref{sec-conclusions}.

\begin{sloppypar}
\section{The Precedence-Constrained Minimum-Cost Arborescence Problem}
\label{sec-pcmca}
\end{sloppypar}

The \textit{Precedence-Constrained Minimum-Cost Arborescence problem} can be formally described as follows. Let $G=(V,A)$ be a directed graph, {$r \in V$}, and $P=(V,R)$ be a precedence graph. Let $c_{ij}$ be a cost associated with every arc $(i,j) \in A$. An arc $(s,t) \in R$ is a precedence relationship between the two vertices $s,t \in V$. The objective of the problem is to find a minimum-cost arborescence $T$ rooted at vertex $r \in V$ such that, for each $(s,t) \in R$, $t$ must not belong to the unique path in $T$ that connects $r$ to $s$. For simplicity, we always assume that for the root $r \in V$, $(s,r) \notin A$ for all $s \in V \backslash \{r\}$, as by definition none of these arcs would be part of an arborescence rooted at $r$, and $(s, r) \notin R$ for all $s \in V \backslash \{r\}$ as the problem would be infeasible otherwise.

Figure \ref{fig-pcmca-example} presents an example that shows the difference between the classic MCA and the PCMCA. The example instance graph with its respective arc costs is shown in the figure on the left, with the precedence relationship $(3, 1)$ highlighted in red. The figure in the middle shows a feasible MCA solution with a cost of 3. The MCA solution is infeasible for the PCMCA since $(3, 1) \in R$, and vertex 1 belongs to the directed path connecting $r$ to vertex 3. To make the solution feasible for the PCMCA, vertex 1 must succeed vertex 3 on the same directed path, or the two vertices must reside on two disjoint paths. A feasible solution with a cost of 4 is shown in the figure on the right.

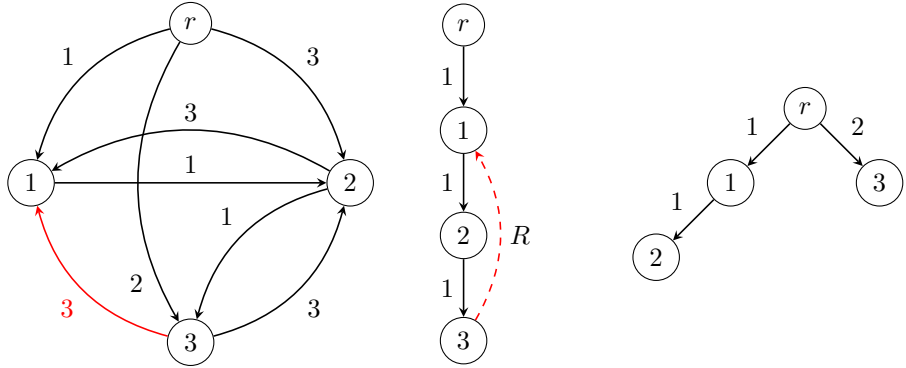
\begin{figure}[t]
	\centering
	\begin{subfigure}{.3\textwidth}
		\centering
		\begin{tikzpicture}[node distance={3cm}, main/.style = {draw, circle}]
			\node[main] (0) {$r$}; 
			\node[main] (1) [below left of=0] {1};
			\node[main] (2) [below right of=0] {2};
			\node[main] (3) [below right of=1] {3};
			
			\path[->,>=stealth, line width=0.6] (0) edge [bend right, auto=right] node {1} (1);
			\path[->,>=stealth, line width=0.6] (0) edge [bend left, auto=left] node {3} (2);
			\path[->,>=stealth, line width=0.6] (0) edge [bend right, auto=right, pos=0.8] node {2} (3);
			
			\path[->,>=stealth, line width=0.6] (1) edge [out=0,in=180, auto=left] node {1} (2);
			
			\path[->,>=stealth, line width=0.6] (2) edge [bend right, auto=right] node {3} (1);
			\path[->,>=stealth, line width=0.6] (2) edge [bend right, auto=right] node {1} (3);
			
			\path[->,>=stealth, color=red, line width=0.6] (3) edge [bend left, auto=left] node {3} (1);
			\path[->,>=stealth, line width=0.6] (3) edge [bend right, auto=right] node {3} (2);
		\end{tikzpicture}
	\end{subfigure}\hspace{9mm}%
	\begin{subfigure}{.3\textwidth}
		\centering
		\begin{tikzpicture}[node distance={1.4cm}, main/.style = {draw, circle}]
			\node[main] (0) {$r$}; 
			\node[main] (1) [below of=0] {1};
			\node[main] (2) [below of=1] {2};
			\node[main] (3) [below of=2] {3};
			
			\path[->,>=stealth, line width=0.6] (0) edge [auto=right] node {1} (1);
			\path[->,>=stealth, line width=0.6] (1) edge [auto=right] node {1} (2);
			\path[->,>=stealth, line width=0.6] (2) edge [auto=right] node {1} (3);
			\path[dashed, ->,>=stealth, color=red, line width=0.6] (3) edge [bend right, auto=right, line width=0.6] node {\textcolor{black}{$R$}} (1);
		\end{tikzpicture}
	\end{subfigure}
	\begin{subfigure}{.3\textwidth}
		\centering
		\begin{tikzpicture}[node distance={1.4cm}, main/.style = {draw, circle}]
			\node[main] (0) {$r$}; 
			\node[main] (1) [below left of=0] {1};
			\node[main] (2) [below left of=1] {2};
			\node[main] (3) [below right of=0] {3};
			
			\path[->,>=stealth, line width=0.6] (0) edge [auto=right] node {1} (1);
			\path[->,>=stealth, line width=0.6] (0) edge [auto=left] node {2} (3);
			\path[->,>=stealth, line width=0.6] (1) edge [auto=right] node {1} (2); node {1} (3);
		\end{tikzpicture}
	\end{subfigure}
	\caption{Comparing a MCA and a PCMCA solution. The graph on the left shows the instance graph with its respective arc costs, with the precedence relationship $(3, 1) \in R$ highlighted in red. The graph in the middle shows the optimal MCA, and the graph on the right shows the optimal PCMCA. The MCA solution is not a feasible PCMCA solution since vertex 1 precedes vertex 3 on the same directed path and $(3, 1) \in R$.}
	\label{fig-pcmca-example}
\end{figure}

\vspace{1em}
\subsection{Computational Complexity}
\label{sec-pcmca-complexity}

Some of the Minimum-Cost Arborescence variations mentioned in Section \ref{sec-introduction} belong to the {{\sc NP}-hard} complexity class. In this section we show that the Precedence-Constrained Minimum-Cost Arborescence Problem is also {{\sc NP}-hard}. The proof is inspired by {the one} introduced in \cite{ref-avoiding-pairs} for the \textit{Path Avoiding Forbidden Pairs problem}.

\begin{thm}
	The PCMCA is {{\sc NP}-\textup{hard.}}
	\label{thm-1}
\end{thm}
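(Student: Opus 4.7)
The plan is to show {\sc NP}-hardness of the PCMCA by a polynomial reduction from the Path Avoiding Forbidden Pairs problem (PAFP), which is {\sc NP}-hard on directed acyclic graphs and whose own hardness proof \cite{ref-avoiding-pairs} supplies the construction to be adapted. Recall that a PAFP instance consists of a DAG $H=(V,E)$, a source $s$, a sink $t$, and a set $F$ of unordered pairs of vertices, and one asks whether $H$ admits an $s$-$t$ path that contains at most one vertex of each pair in $F$.

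Starting from a PAFP instance I would build a PCMCA instance on the vertex set $V\cup\{r\}$, with $r$ a new root, keeping every arc of $E$, adding the arc $(r,s)$, and introducing a controlled family of auxiliary arcs (for example, cheap shortcut arcs $(s,v)$ for $v\in V\setminus\{s,t\}$ and/or costly arcs $(r,v)$) whose role is to guarantee that a spanning arborescence can always be completed around any fixed $s$-$t$ path. The arc costs are chosen together with a decision threshold $K$ so that arborescences of cost at most $K$ are exactly those whose unique $r$-to-$t$ path is the arc $(r,s)$ followed by a directed $s$-$t$ path of $H$; equivalently, any cheap arborescence is forced to ``display'' a genuine $s$-$t$ path of $H$. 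For each forbidden pair $\{a,b\}\in F$ I would fix a topological ordering of $H$, assume without loss of generality that $a$ precedes $b$, and insert the precedence $(b,a)$ into $R$. Since in a DAG any two vertices lying on a common root-to-descendant path of an arborescence must appear in topological order, this single precedence is exactly what is needed to rule out both $a$ and $b$ simultaneously appearing on the $r$-to-$t$ path.

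The equivalence would then be argued in both directions. For the easy direction, assume $T$ is a feasible PCMCA arborescence of cost at most $K$. Its $r$-to-$t$ path projects to an $s$-$t$ path $P$ of $H$; if $P$ contained both $a$ and $b$ of some forbidden pair, then $a$ would be an ancestor of $b$ in $T$, contradicting the precedence $(b,a)\in R$. Hence $P$ is a PAFP solution. For the converse, given a PAFP path $P$, I would build a spanning arborescence by following $(r,s)$ and $P$ from $r$ to $t$ and then attaching the remaining vertices through the auxiliary arcs in a way that does not create any new ancestor-descendant relation violating $R$, while keeping the total cost below $K$.

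The main obstacle I anticipate lies precisely in this converse direction: PAFP feasibility concerns a single path, whereas PCMCA feasibility is a property of the whole spanning arborescence, and a careless completion of $P$ may introduce forbidden ancestor-descendant relations among off-path vertices or may cost too much. The success of the reduction hinges on calibrating the auxiliary arcs and their costs tightly enough that any vertex outside $P$ can be hung from a ``safe'' parent (for example directly from $r$ or from $s$) without violating any precedence, yet loosely enough that cheap arborescences are still forced to route $r$ to $t$ through arcs of $E$. Once this calibration is settled, the reduction is clearly polynomial and the {\sc NP}-hardness of PAFP transfers to the PCMCA.
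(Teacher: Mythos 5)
Your choice of starting problem is legitimate --- reducing from the Path Avoiding Forbidden Pairs problem (PAFP) rather than, as the paper does, from {\sc 3-SAT} (the paper's construction is only \emph{inspired} by the PAFP hardness proof) --- and your translation of a forbidden pair $\{a,b\}$ into the single precedence $(b,a)$, justified by a topological order of the DAG, is sound. The genuine gap is exactly the step you flag and then defer: the ``calibration'' of the auxiliary arcs. This is not a detail that can be settled by choosing costs and a threshold $K$; it cannot be settled by costs at all. A completion arc contributes the same amount to the objective whether it is used to hang an off-path vertex as a leaf or to shortcut the $r$-to-$t$ path into the middle of $H$, so no threshold can separate ``arborescences whose $r$-to-$t$ path is $(r,s)$ followed by a genuine $s$-$t$ path of $H$'' from ``arborescences whose $r$-to-$t$ path begins with an auxiliary arc $(s,v)$ or $(r,v)$ and only then follows arcs of $E$.'' Worse, on non-layered DAGs cheating can be strictly cheaper: if the only $s$-$t$ path of $H$ is a single arc $(s,t)$ but $H$ also contains a long chain $v_1 \to \dots \to v_k \to t$ unreachable from $s$, the arborescence whose $t$-path shortcuts from $s$ to $v_1$ covers all of $V$ with one auxiliary arc, while the legitimate arborescence needs $k$ of them to hang the chain. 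Then both directions of your equivalence fail: a cheap feasible arborescence need not project to any $s$-$t$ path of $H$, and the intended completion of a PAFP path need not be among the cheapest solutions.

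The missing idea is a gadget that makes using a completion arc on the $r$-to-$t$ path \emph{infeasible} rather than expensive. The paper funnels all completion arcs through one new hub vertex $s'$ --- arcs $(r,s')$ and $(s',v)$ for every internal vertex $v$ --- and adds the single precedence $(t,s') \in R$, which by the PCMCA definition forbids $s'$ from lying on the path from $r$ to $t$. Since precedence constraints speak about vertices, not arcs, concentrating all completion arcs at one new vertex converts the arc-level requirement ``no completion arc on the $t$-path'' into a vertex-level requirement expressible as one precedence; note that your variant with arcs $(r,v)$ cannot be repaired this way, because $r$ lies on every path and no precedence can exclude it. With this hub, costs become irrelevant (the paper reduces to pure feasibility, all arc costs equal and positive): the $r$-to-$t$ path is forced through $s$ and arcs of $E$ only, and every off-path vertex hangs on a path $r \to s' \to v$ containing no two pair-mates, so no precedence is violated. (Hanging off-path vertices from $s$ itself, as you also suggest, fails whenever a forbidden pair contains $s$.) Adding this gadget and rerunning your two directions would complete the proof; without it, the reduction does not go through.
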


\begin{figure}[t]
	\centering
	\begin{tikzpicture}[scale=1,transform shape]
		\node[shape=circle,draw=black] (r) at (0,0) {$r$};
		
		\node[shape=circle,draw=black] (s) at (1.5,0) {$s$};
		
		\node[shape=circle,draw=black] (sp) at (5.5,-3) {$s'$};
		
		\node[shape=circle,draw=black] (v11) at (3,1) {$v_{11}$};
		\node[shape=circle,draw=black] (v12) at (3,0) {$v_{12}$};
		\node[shape=circle,draw=black] (v13) at (3,-1) {$v_{13}$};
		
		\node[shape=circle,draw=white] (e11) at (5,1) {};
		\node[shape=circle,draw=white] (e12) at (5,0) {};
		\node[shape=circle,draw=white] (e13) at (5,-1) {};
		
		\node[shape=circle,draw=white] (d1) at (5.5,1) {$\dots$};
		\node[shape=circle,draw=white] (d2) at (5.5,0) {$\dots$};
		\node[shape=circle,draw=white] (d3) at (5.5,-1) {$\dots$};
		
		\node[shape=circle,draw=white] (e21) at (6,1) {};
		\node[shape=circle,draw=white] (e22) at (6,0) {};
		\node[shape=circle,draw=white] (e23) at (6,-1) {};
		
		\node[shape=circle,draw=black] (vm1) at (8.5,1) {$v_{m1}$};
		\node[shape=circle,draw=black] (vm2) at (8.5,0) {$v_{m2}$};
		\node[shape=circle,draw=black] (vm3) at (8.5,-1) {$v_{m3}$};
		
		\node[shape=circle,draw=black] (t) at (10.5,0) {$t$};
		
		\path [->,>=stealth, line width=0.6] (r) edge node[left] {} (s);
		
		\path [->,>=stealth, line width=0.6] (s) edge node[left] {} (v11);
		\path [->,>=stealth, line width=0.6] (s) edge node[left] {} (v12);
		\path [->,>=stealth, line width=0.6] (s) edge node[left] {} (v13);
		
		\path [->,>=stealth, line width=0.6] (v11) edge node[left] {} (e11);
		\path [->,>=stealth, line width=0.6] (v11) edge node[left] {} (e12);
		\path [->,>=stealth, line width=0.6] (v11) edge node[left] {} (e13);
		
		\path [->,>=stealth, line width=0.6] (v12) edge node[left] {} (e11);
		\path [->,>=stealth, line width=0.6] (v12) edge node[left] {} (e12);
		\path [->,>=stealth, line width=0.6] (v12) edge node[left] {} (e13);
		
		\path [->,>=stealth, line width=0.6] (v13) edge node[left] {} (e11);
		\path [->,>=stealth, line width=0.6] (v13) edge node[left] {} (e12);
		\path [->,>=stealth, line width=0.6] (v13) edge node[left] {} (e13);
		
		\path [->,>=stealth, line width=0.6] (e21) edge node[left] {} (vm1);
		\path [->,>=stealth, line width=0.6] (e21) edge node[left] {} (vm2);
		\path [->,>=stealth, line width=0.6] (e21) edge node[left] {} (vm3);
		
		\path [->,>=stealth, line width=0.6] (e22) edge node[left] {} (vm1);
		\path [->,>=stealth, line width=0.6] (e22) edge node[left] {} (vm2);
		\path [->,>=stealth, line width=0.6] (e22) edge node[left] {} (vm3);
		
		\path [->,>=stealth, line width=0.6] (e23) edge node[left] {} (vm1);
		\path [->,>=stealth, line width=0.6] (e23) edge node[left] {} (vm2);
		\path [->,>=stealth, line width=0.6] (e23) edge node[left] {} (vm3);
		
		\path [->,>=stealth, line width=0.6] (vm1) edge node[left] {} (t);
		\path [->,>=stealth, line width=0.6] (vm2) edge node[left] {} (t);
		\path [->,>=stealth, line width=0.6] (vm3) edge node[left] {} (t);
		
		\path [->,>=stealth, line width=0.6] (r) edge node[left] {} (sp);
		
		\path [->,>=stealth, line width=0.6] (sp) edge node[left] {} (v11);
		\path [->,>=stealth, line width=0.6] (sp) edge node[left] {} (v12);
		\path [->,>=stealth, line width=0.6] (sp) edge node[left] {} (v13);
		
		\path [->,>=stealth, line width=0.6] (sp) edge node[left] {} (vm1);
		\path [->,>=stealth, line width=0.6] (sp) edge node[left] {} (vm2);
		\path [->,>=stealth, line width=0.6] (sp) edge node[left] {} (vm3);
		
		\draw [dashed,->, line width=0.6] (t) to [out=270,in=-10] (sp);
		\draw [dashed,->, line width=0.6] (vm1) to [out=150,in=30] (v11);
	\end{tikzpicture}
	\caption{A PCMCA instance reduced from {\sc 3-SAT}. The set of vertices $v_{ij} \subset V$ are the literals of the {\sc 3-SAT} problem, where each layer is a clause that is completely connected only to the clause in the next layer. Dashed arcs show the precedence constraints $R$ between pairs of vertices. As an example, we assume literal $v_{11}$ is the negation of $v_{m1}$, therefore there is a precedence relationship $(v_{m1}, v_{11}) \in R$ which will enforce that no literal and its negation can belong to the same path.}
	\label{reduction}
\end{figure}
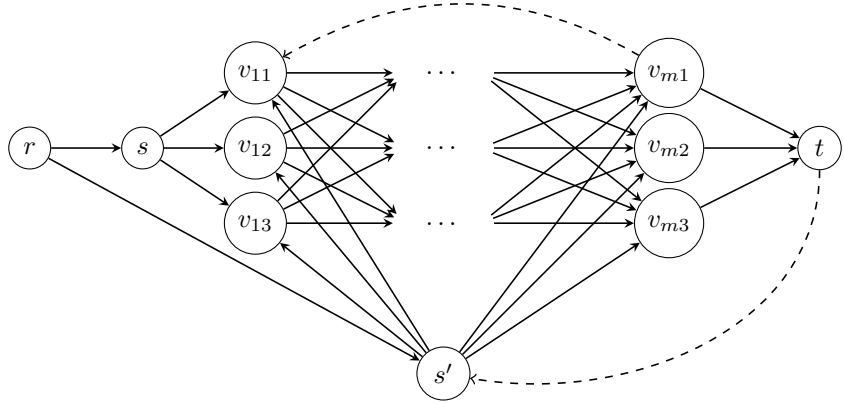

\begin{proof}
	By reduction from {\sc 3-SAT}: {Let $X=\{x_1,x_2,\dots,x_t\}$ be a set of variables}. Let $\Phi = C_1 \, \wedge \, C_2 \, \wedge \, \dots \wedge \, C_m$ be a boolean expression in 3-conjunctive normal form, such that each clause $C_i$, $i = 1, \dots,m$, {is denoted by $(v_{i1} \vee v_{i2} \vee v_{i3})$, where each literal $v_{ik}, 1 \leq k \leq 3$, is associated to one variable in $X$ or its negation.} We will construct a graph $G$ and a set of precedence constraints $R$ such that there exists a {feasible solution of the PCMCA problem} in $G$ if and only if $\Phi$ is satisfiable.
	
	Let $G = (V, A)$ where $V = \{r\} \cup \{s\} \cup \{s'\} \cup \{t\} \cup C$, with $C$, $A$ and $R$ defined as follows.
	\begin{align*}
		C &= {\{v_{ik} : 1 \leq i \leq m, 1 \leq k \leq 3\}} \\
		A &=\{(r,s),(r,s')\} \cup\{(s,v_{1j}),1\le j\le3)\} \cup\{(v_{mj},t),1\le j\le 3\} \\ 
		& \cup \{(v_{ij},v_{i+1,k}),1\le i<m, 1\le j,k\le 3\} \cup \{(s',v_{ij}),1\le i\le m, 1\le j\le 3\} \\
		R &= \{(t,s')\} \cup\{(v_{hk},v_{ij}): h>i, \, v_{hk} \text{ and } v_{ij} \text{ refer to the same variable, but} \\ & \quad\,\,\, \text{exactly one of the two literals is negated} \}
	\end{align*}
	
	Note that $C$ contains $3m$ vertices, one for each literal of each clause $C_i$, with all arcs having {an equal positive cost}. The three sets $C$, $A$, and $R$ induce the graph shown in Figure \ref{reduction}. The set of precedence constraints, {besides $(t,s')$,} is between two vertices that refer to the same literal, but exactly one of the two literals is negated. If a {feasible solution $T$} of the PCMCA problem can be found in $G$, this implies that:
	
	\begin{enumerate}
		\item no path from $s'$ to $t$ exists {in $T$}
		\item in any (rooted) path there is no pair of vertices corresponding to a variable and its negation
		\item there is a unique path $P$ from $r$ to $t$ which passes through $s$ and through a vertex of each clause
	\end{enumerate}
	
	{The formula can be satisfied by assigning true values to all the literals corresponding to the vertices in $P \cap C$, and assigning false values {to all the variables {not} associated with these literals}. This satisfies all the clauses.}
	
	{Conversely, if} the formula is satisfied then each clause has at least one literal with true value, and no variable is assigned to both true and false (in different clauses). We construct a PCMCA feasible solution as follows. We start by building a path $P$ from $r$ to $t$ which includes $s$ and exactly one vertex from each clause, corresponding to a literal with true value. We complete the arborescence by adding $(r,s')$ and $(s',v)$ for each $v\not\in P$. 
\end{proof}

\subsection{A Set-Based Model}
\label{sec-pcmca-model}

The MILP previously proposed in \cite{ref-dellamico} suffers from scalability issues because of the cubic number of variables (relative to the number of vertices and the precedence relationships) used to model the precedence relationships between vertex pairs. A new integer linear program for the PCMCA is introduced in this section. 

We extend the classic connectivity constraints for the MCA \cite{ref-houndji} in such a way to take precedences into account. When considering a set $S \subseteq V \backslash \{r\}$ we add a constraint for each $j \in S${,} and we force that at least one active arc must enter $S$ coming from {the set} of vertices allowed to precede $j$ on the path connecting $j$ to $r$.

Let $x_{ij}$ be a variable associated with every arc $(i,j) \in A$ such that $x_{ij} = 1$ if $(i,j) \in T$ and 0 otherwise, where $T$ is the resulting optimal arborescence. Let $V_j = \{i \in V : (j,i) \notin R\}$ be the set of vertices that can precede $j$ on a directed path from the root without introducing precedence violations {or, in turn, a violating path, which is a directed path that violates some precedence relationship in $R$}.

\allowdisplaybreaks
\begin{align}
	\text{minimize} & \sum_{(i,j) \in A} c_{ij} x_{ij}
	\label{eqn-1} 
	\\
	\text{subject to} & \sum_{(i,j) \in A} x_{ij} = 1 & \forall j \in V \backslash \{r\}
	\label{eqn-2}
	\\
	& \sum_{\substack{(i,k) \in A:\\ i \in V_j \backslash S, \, k \in S}} x_{ik} \geq 1 & \forall j \in V \backslash \{r\}, \forall S \subseteq V_j \backslash \{r\} : j \in S
	\label{eqn-3}
	\\
	& x_{ij} \in \{0,1\} & \forall (i,j) \in A
	\label{eqn-4}
\end{align}

Constraints (\ref{eqn-2}) implies the first property of an arborescence namely that every vertex $v \in V \backslash \{r\}$ must have a single parent. Constraints (\ref{eqn-3}) model the connectivity constraints, that is every vertex $j \in V \backslash \{r\}$ must be reachable from the root. Note that $V_j \backslash S$ contains at least $r$, while $S$ contains at least $j$. The set of constraints (\ref{eqn-3}) reduces to the classical connectivity constraints for the MCA which are $\sum_{(i,k) \in A: \, i \notin S, \, k \in S} x_{ik} \geq 1$ $\forall S \subseteq V \backslash \{r\}$ when the set $R$ of precedence relationships is empty. This is because when $R$ is an empty set, $V_j = V$ for all $j \in V\backslash\{r\}$. Constraints (\ref{eqn-3}) also impose the precedence relationships. Inequality (\ref{eqn-3}) implies that the resulting arborescence will not include vertex $t$ in the directed path connecting $r$ to $s$ when $(s,t) \in R$. Note that this is the same inequality named weak $\sigma$-inequality considered by Ascheuer, Jünger \& Reinelt \cite{ref-ascheuer} for the Sequential Ordering Problem. Finally, constraints (\ref{eqn-4}) define the domain of the variables. The MILP model proposed has $O(|A|)$ variables, and $O(|A|)$ constraints, plus an exponential number of connectivity constraint (\ref{eqn-3}). Although the number of constraints of the \textit{Set-Based Model} is exponential, it is more efficient at solving the problem than the model introduced in \cite{ref-dellamico}. This is because in practice the number of constraints that are dynamically added to the model is small. Moreover, the model uses a smaller number of variables. An experimental validation for these considerations will be provided by the experiments in Section \ref{sec-results}.

One approach to solve a {linear relaxation model (LR-model)} that has an exponential number of constraints is to start by solving the {LR-model} without including a large set of constraints (such as (\ref{eqn-3})), then iteratively adding a constraint once it is violated, then solving the new {LR-model} again. {A procedure for finding a violated constraint is called a \emph{separation procedure}.} The optimal solution {of the LR-model} is found as soon as there are no violated constraints. {When solving a MILP, the optimality gap needs to be closed to find the optimal solution even if no violated inequality is found.} 

{In the literature, the large set of constraints that are necessary to model the problem, but are added dynamically to the model only when they are violated, are known as \textit{lazy constraints}. Note that using this approach, the separation procedure must also be used to check the feasibility of integer solutions found by the linear relaxation or primal heuristics.}

\begin{algorithm}[H]
	\begin{algorithmic}[1]
		\State $G = (V,A)$ is a directed graph, $\bar{x}$ is a fractional LP-solution
		\Procedure {Find\_Violated\_Inequality} {$G$, $\bar{x}$}
		\For {$j \in V \backslash\{r\}$}
		\State Construct a directed graph ${D_j} = (V_j, A')$ such that:
		\State $V_j = \{i \in V : (j,i) \notin R\}$
		\State $A' = \{(i,k) \in A \, | \, i,k \in V_j\}$
		\State $c_{ik} = \bar{x}_{ik} \,\,\,\, \forall \, (i,k) \in A'$
		\State Calculate a minimum $(r, j)$-cut $C$ in ${D_j}$
		\If{$\text{the cost of } C < 1$}
		\State return the violated inequality $\sum_{(i,k) \in C \,\,} x_{ik} \geq 1$
		\EndIf
		\EndFor
		\EndProcedure
		\caption{Separation Procedure for Inequalities (\ref{eqn-3})}\label{alg-1}
	\end{algorithmic}
\end{algorithm}

{Algorithm \ref{alg-1} describes the separation procedure for inequalities (\ref{eqn-3}). Let $\bar{x}$ be a solution of the linear relaxation or a candidate primal solution. An inequality (\ref{eqn-3}) that is violated by the solution $\bar{x}$ can be detected by computing a minimum $(r,j)$-cut $C$ in a directed graph $D_j = (V_j, A')$, where $A'$ is equal to the set of arcs $A$ minus the arcs incident to the immediate successors of $j$ in the precedence graph. The cost $c_{ik}$ of an arc $(i,k) \in A'$ is equal to $\bar{x}_{ik}$. The value of the minimum $(r,j)$-cut $C$ in $D_j$ can tell us the following about the given fractional solution:}

\begin{enumerate}
	\item If the cost of a minimum cut is equal to 0, {then  vertex $j$ is not reachable from $r$ {in $D_j$}. In this case, the solution does not contain a path from $r$ to $j$, or contains a single or multiple paths from $r$ to $j$, all of which pass through a successor of $j$.}
	\item If the cost of a minimum cut is in the range $(0, 1)$, {then vertex $j$ is reachable from $r$ {in $D_j$}. In this case, the solution contains multiple paths from $r$ to $j$, and at least one of them passes through a successor of $j$.}
	\item If the cost of a minimum cut is equal to 1, then {vertex $j$ is reachable from $r$ through a single or multiple paths {in $D_j$}, although possibly some of them pass through a successor of $j$.}
\end{enumerate}

{In the first two cases, the minimum cut $C$ defines an inequality (\ref{eqn-3}) violated by $\bar{x}$, however in the last case a violated inequality (\ref{eqn-3}) {does not exist} even if the fractional solution $\bar{x}$ contains a violating path. Therefore, although inequalities (\ref{eqn-3}) are valid inequalities for that PCMCA, there are fractional LP-solutions that contain violating paths, but satisfy inequalities (\ref{eqn-3}).}

Figure \ref{fig-fail-fractional} shows an example on how the {separation} procedure works. The figure also shows a fractional solution {of the \emph{Set-Based} model} that contains a violating path, but does not violate any inequality (\ref{eqn-3}). {Another example, showing how the \emph{Path-Based} model introduced in  \cite{ref-dellamico} fails to detect a violating path in a fractional solution, is presented in \ref{a1}.}

A drawback of the \textit{Set-Based} model is the high computational complexity of the separation procedure of inequalities (\ref{eqn-3}), which has a complexity of $O(n^4)$, assuming it uses an $O(n^3)$ algorithm for computing a minimum $(s,t)$-cut in {$D_j$} \cite{ref-cut}. 

\begin{figure}[t]
	\centering
	\begin{subfigure}{.45\textwidth}
		\centering
		\begin{tikzpicture}[node distance={2.5cm}, main/.style = {draw, circle},scale=0.9, transform shape]
			\node[main] (r) {$r$}; 
			\node[main] (t) [below left of=0] {$t$};
			\node[main] (1) [below left of=t] {1};
			\node[main] (2) [right of=1] {2};
			\node[main] (3) [right of=2] {3};
			\node[main] (s) [below of=2] {$s$};
			
			\path[->,>=stealth, line width=0.6] (r) edge [auto=right] node {1.0} (t);
			\path[->,>=stealth, line width=0.6] (r) edge [auto=left] node {0.5} (2);
			\path[->,>=stealth, line width=0.6] (r) edge [auto=left] node {1.0} (3);
			
			\path[->,>=stealth, line width=0.6] (t) edge [auto=right] node {0.5} (1);
			\path[->,>=stealth, line width=0.6] (t) edge [auto=right] node {0.5} (2);
			
			\path[->,>=stealth, line width=0.6] (2) edge [auto=left] node {0.5} (1);
			
			\path[->,>=stealth, line width=0.6] (1) edge [auto=right] node {0.5} (s);
			\path[->,>=stealth, line width=0.6] (3) edge [auto=left] node {0.5} (s);
		\end{tikzpicture}
	\end{subfigure}
	\begin{subfigure}{.45\textwidth}
		\centering
		\begin{tikzpicture}[node distance={2.5cm}, main/.style = {draw, circle},scale=0.9, transform shape]
			\node[main] (r) {$r$}; 
			\node[main] (1) [below left of=t] {1};
			\node[main] (2) [right of=1] {2};
			\node[main] (3) [right of=2] {3};
			\node[main] (s) [below of=2] {$s$};
			
			\path[->,>=stealth, line width=0.6] (r) edge [auto=right] node {0.5} (2);
			\path[->,>=stealth, line width=0.6] (r) edge [auto=left] node {1.0} (3);
			
			\path[->,>=stealth, line width=0.6] (2) edge [auto=right] node {0.5} (1);
			
			\path[->,>=stealth, line width=0.6] (1) edge [auto=right] node {0.5} (s);
			\path[->,>=stealth, line width=0.6] (3) edge [auto=left] node {0.5} (s);
			
			\draw [dashed,red, line width=0.6] (-3.5,-5) to[out=45,in=140] (1.5, -5);
		\end{tikzpicture}
	\end{subfigure}
	\caption{An example of a fractional solution {of the \emph{Set-Based} model} that contains a violating path, and does not violate an inequality (\ref{eqn-3}). Every arc cost is associated with the value of its respective variable $x_{ij}$. For this solution we have the violated precedence $(s, t) \in R$. The figure on the left shows the solution, and the figure on the right shows the graph ${D_j}$ which has a minimum $(r, s)$-cut of value 1 indicated by the red dashed line.}
	\label{fig-fail-fractional}
\end{figure}
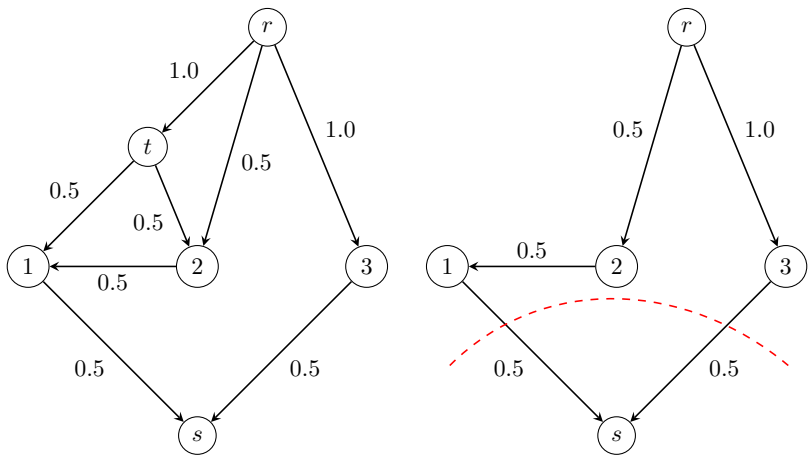

\begin{sloppypar}
\section{The Precedence-Constrained Minimum-Cost Arborescence Problem with Waiting Times}
\label{sec-pcmrt}
\end{sloppypar}

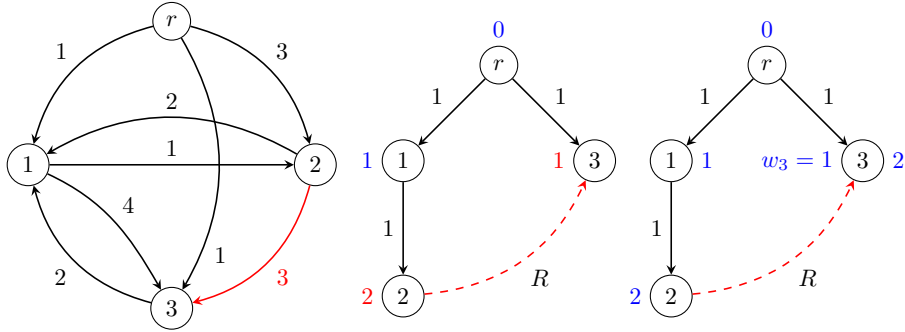
\begin{figure}[t]
	\centering
	\hspace{-4mm}%
	\begin{subfigure}{.3\textwidth}
		\centering
		\begin{tikzpicture}[node distance={3cm}, main/.style = {draw, circle},scale=0.9, transform shape]
			\node[main] (0) {$r$}; 
			\node[main] (1) [below left of=0] {1};
			\node[main] (2) [below right of=0] {2};
			\node[main] (3) [below right of=1] {3};
			
			\path[->,>=stealth, line width=0.6] (0) edge [bend right, auto=right] node {1} (1);
			\path[->,>=stealth, line width=0.6] (0) edge [bend left, auto=left] node {3} (2);
			\path[->,>=stealth, line width=0.6] (0) edge [bend left, auto=left, pos=0.8] node {1} (3);
			
			\path[->,>=stealth, line width=0.6] (1) edge [out=0,in=180, auto=left] node {1} (2);
			\path[->,>=stealth] (1) edge [out=-25,in=120, auto=left, line width=0.6] node {4} (3);
			
			\path[->,>=stealth, line width=0.6] (2) edge [bend right, auto=right] node {2} (1);
			\path[->,>=stealth, color=red, line width=0.6] (2) edge [bend left, auto=left] node {3} (3);
			
			\path[->,>=stealth, line width=0.6] (3) edge [bend left, auto=left] node {2} (1);
		\end{tikzpicture}
	\end{subfigure}\hspace{9mm}%
	\begin{subfigure}{.3\textwidth}
		\centering
		\begin{tikzpicture}[node distance={2cm}, main/.style = {draw, circle},scale=0.9, transform shape]
			\node[main, label=above:\textcolor{blue}{0}] (0) {$r$}; 
			\node[main, label=left:\textcolor{blue}{1}] (1) [below left of=0] {1};
			\node[main, label=left:\textcolor{red}{2}] (2) [below of=1] {2};
			\node[main, label=left:\textcolor{red}{1}] (3) [below right of=0] {3};
			
			\path[->,>=stealth, line width=0.6] (0) edge [auto=right] node {1} (1);
			\path[->,>=stealth, line width=0.6] (0) edge [auto=left] node {1} (3);
			
			\path[->,>=stealth, line width=0.6] (1) edge [auto=right] node {1} (2);
			
			\path[dashed, ->,>=stealth, color=red, line width=0.6] (2) edge [bend right, auto=right] node {\textcolor{black}{$R$}} (3);
		\end{tikzpicture}
	\end{subfigure}
	\begin{subfigure}{.3\textwidth}
		\centering
		\begin{tikzpicture}[node distance={2cm}, main/.style = {draw, circle},scale=0.9, transform shape]
			\node[main, label=above:\textcolor{blue}{0}] (0) {$r$}; 
			\node[main, label=right:\textcolor{blue}{1}] (1) [below left of=0] {1};
			\node[main, label=left:\textcolor{blue}{2}] (2) [below of=1] {2};
			\node[main, label=right:\textcolor{blue}{2}, label=left: \textcolor{blue}{$w_3 = 1$}] (3) [below right of=0] {3};
			
			\path[->,>=stealth, line width=0.6] (0) edge [auto=right] node {1} (1);
			\path[->,>=stealth, line width=0.6] (0) edge [auto=left] node {1} (3);
			
			\path[->,>=stealth, line width=0.6] (1) edge [auto=right] node {1} (2);
			
			\path[dashed, ->,>=stealth, color=red, line width=0.6] (2) edge [bend right, auto=right] node {\textcolor{black}{$R$}} (3);
		\end{tikzpicture}
	\end{subfigure}
	\caption{Comparing an instance solved as a PCMCA, and solved as a PCMCA-WT. The graph on the left shows the instance with its respective arc costs, and the precedence relationship $(2, 3) \in R$ highlighted in red. The graph in the middle shows the optimal PCMCA {solution of cost 3}, and the graph on the right shows the optimal PCMCA-WT {solution of cost 4}. The PCMCA solution is not a feasible PCMCA-WT solution since the flow {enters} vertex 3 before {entering} vertex 2 and $(2, 3) \in R$.}
	\label{fig-pcmcawt-example}
\end{figure}

In the \textit{Precedence-Constrained Minimum-Cost Arborescence Problem with Waiting Times} (PCMCA-WT) {a flow starts from the root $r$ at time 0, and traverses each path of the arborescence. The cost $c_{ij}$ of an arc $(i, j) \in A$ represents the time required to traverse that arc. Let $d_j$ be the time at which the flow enters vertex $j \in V$. For any $(s,t) \in R$, $d_t \geq d_s$, which means that the flow must enter vertex $t$ at the same time step or after entering vertex $s$. Let $w_j$ be the waiting time before the flow enters vertex $j$ required to respect the aforementioned constraint. The objective is to find an arborescence $T$ that has a minimum total cost plus total waiting time, where the flow never enters $t$ earlier than entering $s$ for all $(s, t) \in R$. For simplicity, we always assume that for the root $r \in V$, $(i, r) \notin A$ for all $i \in V \backslash \{r\}$, as by definition none of these arcs would be part of an arborescence rooted at $r$, and $(s, r) \notin R$ for all $s \in V \backslash \{r\}$, as the problem would be infeasible otherwise.}

Figure \ref{fig-pcmcawt-example} presents an example that shows the difference between the PCMCA and the PCMCA-WT. Next to each vertex we have its corresponding $d_t$ value. In this example we have the precedence relationship $(2, 3)$ highlighted in red. The two solutions depicted are valid solutions for the PCMCA, since they both satisfy the precedence constraints, that is $t$ never precede $s$ on the same directed path for all $(s,t) \in R$. The solution in the middle shows the optimal PCMCA {solution} with a total cost of 3 (sum of all the arcs). We can see that the solution in the middle is not a feasible PCMCA-WT solution since $(2, 3) \in R$ but $d_3 < d_2$. The solution on the right shows an optimal PCMCA-WT solution with a cost of 4 (sum of all the arcs plus waiting time at each vertex). The solution results in a waiting time of 1 at vertex 3, since the time from $r$ to $2$ is 2, and the time from $r$ to $3$ is 1.

\vspace{1em}
\subsection{Computational Complexity}
\label{sec-pcmrt-complexity}

{In this section we show that the PCMCA-WT is {\sc NP}-hard.}

{The \textit{Rectilinear Steiner Arborescence} (RSA) \textit{Problem} \cite{ref-rsa} is an {\sc NP}-hard problem formally defined as follows. Let $P = \{p_1, p_2, \dots, p_n\}$ be a set of points in the first quadrant of the Cartesian plane, where $p_i = (x_i, y_i)$ with $x_i,y_i \geq 0$, and $p_1=(0,0)$.} {A complete grid can be created, where the points in P are on the intersections of vertical and horizontal lines. A set $S$ of Steiner vertices can be added, corresponding to the $O(|P|^2)$ intersection points not overlapping with the points in $P$.  The arcs of the problem are the right-directed horizontal segments and the up-directed vertical segments between two {adjacent} points of the grid $P \cup S$. {The cost associated with each arc $(p_i, p_j)$ is defined as $|x_i - x_j| + |y_i - y_j|$.} Figure \ref{rsa-example} shows an example of an RSA instance with 5 points, and the relative Steiner vertices.}

{Given a positive value $k$, the decision version of the RSA problem consists in {deciding whether there is} an arborescence with total {length} not greater than $k$ such that the arborescence is rooted at $p_1$ and it contains a unique path from $p_1$ to $p_i$ for all $i \in \{1,2,\dots,{n}\}$. Note that the length of each path from $p_1$ to  $p_i$ is $x_i + y_i$ by construction.}

\begin{figure}[t]
	\centering
	\hspace{-4mm}%
		\centering
		\begin{tikzpicture}[node distance={3cm}, main/.style = {draw, circle},scale=0.9, transform shape]
			\node[main] at (0,0) (p1) {$p_1$}; 
			\node[main] at (2.6, 0) (p2) {$p_2$};
			\node[main] at (3.9, 1.6) (p3) {$p_3$};			
			\node[main] at (6.3, 5) (p4) {$p_4$};			
			\node[main] at (1.3, 5) (p5) {$p_5$};
			
			\node[main] at (0, 1.6) (s1) {$s_1$};	
			\node[main] at (0, 5) (s2) {$s_2$};
			\node[main] at (1.3, 0) (s3) {$s_3$};	
			\node[main] at (1.3, 1.6) (s4) {$s_4$};
			\node[main] at (2.6, 1.6) (s5) {$s_5$};
			\node[main] at (2.6, 5) (s6) {$s_6$};	
			\node[main] at (3.9, 0) (s7) {$s_7$};	
			\node[main] at (3.9, 5) (s8) {$s_8$};
			\node[main] at (6.3, 0) (s9) {$s_9$};
			\node[main, scale=0.9] at (6.3, 1.6) (s10) {$s_{10}$};						
			\path[->,>=stealth, dashed] (p1) edge (s3);
			\path[->,>=stealth, dashed] (s3) edge (p2);
			\path[->,>=stealth, dashed] (p2) edge (s7);
			\path[->,>=stealth, dashed] (s7) edge (s9);
			\path[->,>=stealth, dashed] (s1) edge (s4);
			\path[->,>=stealth, dashed] (s4) edge (s5);
			\path[->,>=stealth, dashed] (s5) edge (p3);
			\path[->,>=stealth, dashed] (p3) edge (s10);
			\path[->,>=stealth, dashed] (s2) edge (p5);
			\path[->,>=stealth, dashed] (p5) edge (s6);
			\path[->,>=stealth, dashed] (s6) edge (s8);
			\path[->,>=stealth, dashed] (s8) edge (p4);
			
			\path[->,>=stealth, dashed] (p1) edge (s1);
			\path[->,>=stealth, dashed] (s1) edge (s2);
			\path[->,>=stealth, dashed] (s3) edge (s4);
			\path[->,>=stealth, dashed] (s4) edge (p5);
			\path[->,>=stealth, dashed] (p2) edge (s5);
			\path[->,>=stealth, dashed] (s5) edge (s6);
			\path[->,>=stealth, dashed] (s7) edge (p3);
			\path[->,>=stealth, dashed] (p3) edge (s8);
			\path[->,>=stealth, dashed] (s9) edge (s10);
			\path[->,>=stealth, dashed] (s10) edge (p4);
			
		\end{tikzpicture}
	
	\caption{{The figure shows an RSA instance with 5 points and 10 Steiner vertices, while the dashed lines represent the arcs of the instance.}}
	\label{rsa-example}
\end{figure}
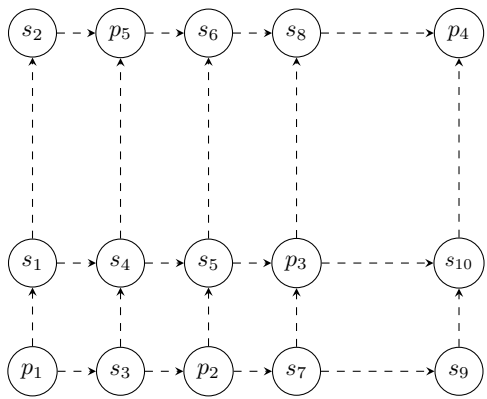

\begin{thm}
	{The PCMCA-WT is {\sc NP}-\textup{hard.}}
\end{thm}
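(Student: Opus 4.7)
The plan is to reduce from the Rectilinear Steiner Arborescence (RSA) problem, just noted to be {\sc NP}-hard. Given an RSA instance with points $P = \{p_1,\dots,p_n\}$, Steiner vertices $S$, grid arcs with Manhattan lengths, and budget $k$, I construct a PCMCA-WT instance as follows. The vertex set is $P \cup S \cup \{v_2,\dots,v_n\}$, where each $v_i$ is a fresh dummy vertex. The arc set contains every RSA grid arc with its original cost, a zero-cost \emph{jump} arc $(p_1,s)$ for every $s \in S$, and a \emph{dummy} arc $(p_1,v_i)$ of cost $x_i+y_i$ for each $i \ge 2$. The precedence set is $R = \{(v_i,p_i) : i = 2,\dots,n\}$, which together with $d_{v_i} = x_i+y_i$ forces $d_{p_i} \ge x_i+y_i$ in any arborescence. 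The root is $p_1$, and the budget is $k' = k + L$, where $L = \sum_{i=2}^{n}(x_i+y_i)$ is the unavoidable contribution of the dummy arcs.

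For the easy direction, given an RSA tree $T^\ast$ of length at most $k$, I build a spanning arborescence of the PCMCA-WT instance by taking the arcs of $T^\ast$, adding a jump arc for every Steiner not already reached by $T^\ast$, and including every dummy arc. Since each $p_1 \to p_i$ path uses only grid arcs, its length is exactly $x_i+y_i$, so $d_{p_i} = x_i+y_i$ and no waiting is needed. The PCMCA-WT cost is then $\mathrm{length}(T^\ast) + 0 + L \le k + L = k'$.

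The harder direction is the converse. Given a PCMCA-WT arborescence $T'$ of cost at most $k'$, write its cost as $L + G + W$, where $G$ is the total cost of grid arcs in $T'$ and $W$ is the total waiting time, so $G + W \le k$. I build a subgraph $T_R$ of the RSA grid containing a path from $p_1$ to every $p_i$ as follows: for each $p_i$, include every grid arc that appears on the $p_1 \to p_i$ path in $T'$; whenever that path begins with a jump arc $(p_1,s)$, additionally include a shortest grid path from $p_1$ to $s$, whose cost is $x_s + y_s$. Any spanning arborescence of $T_R$ rooted at $p_1$ is then an RSA-feasible solution, with cost at most the total cost of $T_R$. The key observation is that for every jump-start Steiner $s$, the first $p$-vertex along each branch of the subtree rooted at $s$ in $T'$ must accumulate a waiting of exactly $x_s + y_s$ in order to satisfy $d_{p_i} \ge x_i+y_i$, while descendant $p$-vertices inherit enough delay from their ancestor $p_j$ to need no further waiting. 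Because each $p_i$ has at most one jump-start ancestor, the ``first'' $p$-vertices attributed to distinct Steiners are themselves distinct, so $\sum_{s \text{ jump-start}}(x_s + y_s) \le W$, giving $\mathrm{cost}(T_R) \le G + W \le k$.

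The main obstacle I anticipate is making this waiting-time accounting rigorous: one must partition the $p_i$'s by their jump-start Steiner (placing in a ``pure-grid'' class those with none), and show that each charge $x_s + y_s$ is paid in full by a waiting contribution located at a distinct first $p$-descendant of $s$, with no double counting across branches or across Steiners. Once this bookkeeping is in place, the rest reduces to the inequality $\mathrm{cost}(T_R) \le G + W$ and the RSA-feasibility of the spanning arborescence extracted from $T_R$.
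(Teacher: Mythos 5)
Your reduction starts from the same source problem as the paper's (the decision version of RSA), but the gadget is genuinely different, and it works. The paper keeps the vertex set $P \cup S$, designates the farthest terminal $P_{FAR} \in \argmax_{p_i \in P}\{x_i + y_i\}$ as a hub with zero-cost arcs to all Steiner vertices, and sets $R = \{(p, P_{FAR}) : p \in P \setminus \{P_{FAR}\}\}$. There the precedences act purely structurally: no terminal can descend from $P_{FAR}$, since positive arc costs would force $d_p > d_{P_{FAR}}$; and the converse direction is an exchange argument on an \emph{optimal} solution (every Steiner leaf must have $P_{FAR}$ as parent, else re-parenting it to $P_{FAR}$ at cost zero improves the solution), after which deleting $P_{FAR}$'s subtree leaves a grid tree spanning $P$ of cost at most $k$. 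Waiting times never actually enter the paper's argument. Your construction instead adds dummy vertices $v_i$ and precedences $(v_i, p_i)$ to impose the time lower bounds $d_{p_i} \geq x_i + y_i$, so the waiting-time mechanism itself is what penalizes the zero-cost jump arcs. In exchange, your converse needs no optimality assumption (any feasible solution of cost at most $k' = k + L$ yields an RSA tree of cost at most $k$), but it requires the waiting-time accounting and shifts the budget by $L$. Both are legitimate; yours exercises exactly the feature that distinguishes the PCMCA-WT from the PCMCA, while the paper's is shorter because it reduces everything to arc costs.

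One repair to your accounting step: it is not true that a waiting of exactly $x_s + y_s$ "must accumulate at the first $p$-vertex along each branch." Feasibility only forces the \emph{total} waiting along the $p_1 \to p_i$ path to be at least $x_s + y_s$, and that waiting may sit anywhere on the path, e.g. at $s$ itself or at intermediate Steiner vertices, in which case your per-vertex attribution breaks down. The clean bookkeeping avoids localizing the waiting at all: for each jump-start Steiner $s$ (one with at least one descendant in $P$), pick a \emph{single} such descendant $p_i$; since the arcs on the path from $s$ to $p_i$ have total cost $(x_i + y_i) - (x_s + y_s)$ and $d_{p_i} \geq d_{v_i} \geq x_i + y_i$, the waiting summed over the vertices of that path is at least $x_s + y_s$. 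These paths lie in the subtrees of $T'$ rooted at distinct children of $p_1$, hence are vertex-disjoint for distinct jump-start Steiners, and summing the charges gives $\sum_{s}(x_s + y_s) \leq W$ with no double counting. With this one-path-per-Steiner argument, the partition of the $p_i$'s by jump-start ancestor that you anticipated needing becomes unnecessary, and the rest of your proof goes through as written.
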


\begin{proof}
	{By a reduction from the decision version of the RSA problem{: we} construct a graph $G=(V, A)$ and a set $R$  of precedence constraints such that there exist a PCMCA-WT solution of cost at most $k$ if and only if a RSA of cost at most $k$ exists. Given an instance of the RSA problem with a set of points P and a set of Steiner points S, consider the PCMCA-WT instance defined as follows:}
	\begin{align*}
		{V} \mkern150mu & \mkern-100mu {= \, P \cup S}  \\
		{A'} \mkern150mu & \mkern-100mu {= \{(i,j): \text{$j$ is immediately on the top of $i$ in the grid, or $j$ is im-}}\\
		& \mkern-100mu {\text{\qquad mediately on the right of $i$ in the grid\}}} \\
		{A} \mkern155mu & \mkern-100mu {= \, A' \cup \{(P_{FAR}, s_i), s_i\in S\} \text{, with } {P_{FAR} \in \argmax_{p_i \in P} \{x_i + y_i\}} \mkern100mu} \\
		{R} \mkern155mu & \mkern-100mu {= \, \{(p,P_{FAR}): p \in P \setminus \{P_{FAR}\}\}} \\	
		{c_{ij}} \mkern150mu & \mkern-100mu {= \, (x_j - x_i) + (y_j - y_i) \text{ for } (i,j) \in A'} \\
		{c_{P_{FAR}, s_i}} \mkern110mu & \mkern-100mu {= \, 0 \text{ for } s_i \in S}
	\end{align*}
	\begin{sloppypar}
		
		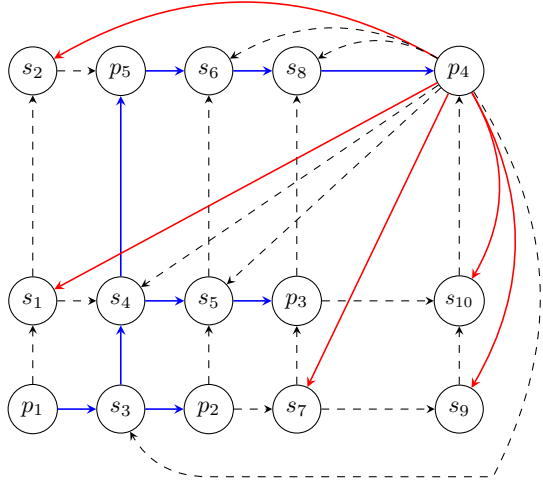
\begin{figure*}[t!]
			\centering
			\begin{tikzpicture}[node distance={3cm}, main/.style = {draw, circle},scale=0.9, transform shape]
				\node[main] at (0,0) (p1) {$p_1$}; 
				\node[main] at (2.6, 0) (p2) {$p_2$};
				\node[main] at (3.9, 1.6) (p3) {$p_3$};			
				\node[main] at (6.3, 5) (p4) {$p_4$};			
				\node[main] at (1.3, 5) (p5) {$p_5$};
				
				\node[main] at (0, 1.6) (s1) {$s_1$};	
				\node[main] at (0, 5) (s2) {$s_2$};
				\node[main] at (1.3, 0) (s3) {$s_3$};	
				\node[main] at (1.3, 1.6) (s4) {$s_4$};
				\node[main] at (2.6, 1.6) (s5) {$s_5$};
				\node[main] at (2.6, 5) (s6) {$s_6$};	
				\node[main] at (3.9, 0) (s7) {$s_7$};	
				\node[main] at (3.9, 5) (s8) {$s_8$};
				\node[main] at (6.3, 0) (s9) {$s_9$};
				\node[main, scale=0.9] at (6.3, 1.6) (s10) {$s_{10}$};		
								
				\path[->,>=stealth, color=blue, line width=0.6] (p1) edge (s3);
				\path[->,>=stealth, color=blue, line width=0.6] (s3) edge (p2);
				\path[->,>=stealth, dashed] (s7) edge (s9);
				\path[->,>=stealth, dashed] (s1) edge (s4);
				\path[->,>=stealth, dashed] (p3) edge (s10);
				\path[->,>=stealth, dashed] (s2) edge (p5);
				\path[->,>=stealth, color=blue, line width=0.6] (p5) edge (s6);
				\path[->,>=stealth, color=blue, line width=0.6] (s6) edge (s8);
				\path[->,>=stealth, color=blue, line width=0.6] (s8) edge (p4);
				\path[->,>=stealth, color=blue, line width=0.6] (s4) edge (s5);
				\path[->,>=stealth, color=blue, line width=0.6] (s5) edge (p3);	
							
				\path[->,>=stealth, dashed] (s7) edge (p3);
				\path[->,>=stealth, dashed] (p2) edge (s7);
				\path[->,>=stealth, dashed] (p1) edge (s1);
				\path[->,>=stealth, dashed] (s1) edge (s2);
				\path[->,>=stealth, color=blue, line width=0.6] (s3) edge (s4);
				\path[->,>=stealth, color=blue, line width=0.6] (s4) edge (p5);
				\path[->,>=stealth, dashed] (p2) edge (s5);
				\path[->,>=stealth, dashed] (s5) edge (s6);
				\path[->,>=stealth, dashed] (p3) edge (s8);
				\path[->,>=stealth, dashed] (s9) edge (s10);
				\path[->,>=stealth, dashed] (s10) edge (p4);
				
				\tikzset{mystyle/.style={->,relative=false,in=0,out=0}}				
				\draw [->,>=stealth, red, line width=0.6] (p4) to [bend right] (s2);
				\draw [->,>=stealth, red, line width=0.6] (p4) to [bend left] (s10);
				\draw [->,>=stealth, red, line width=0.6] (p4) to [bend left] (s9);
				\draw [->,>=stealth, red, line width=0.6] (p4) to (s7);		
				\draw [->,>=stealth, red, line width=0.6] (p4) to (s1);
				
				\draw [->,>=stealth, dashed] (p4) to [bend left] ($(s9)+(.5,-1)$) to
				($(p2)+(0,-1)$) to [bend left] (s3);
				\draw [->,>=stealth, dashed] (p4) to (s4);
				\draw [->,>=stealth, dashed] (p4) to [bend right] (s6);
				\draw [->,>=stealth, dashed] (p4) to (s5);
				\draw [->,>=stealth, dashed] (p4) to [bend right] (s8);
			\end{tikzpicture}
			\caption{{The PCMCA-WT instance associated with the RSA instance depicted in Figure \ref{rsa-example}. A RSA solution of minimum cost is given by the blue arcs. The red arcs have cost 0 and, together with the blue ones, form an optimal PCMCA-WT solution.}}
			\label{fig-rsa-pcmca}
		\end{figure*}
		
		If the instance of RSA has a solution of cost $k$, then a solution of cost $k$ for the instance of PCMCA-WT can be obtained. Starting from the solution of the RSA problem,  it is possible to complete the solution of the associated PCMCA-WT problem by adding 0-cost arcs (red arcs) to connect the node  $P_{FAR}$ to the Steiner nodes not used in the RSA solution. {The solution of an RSA instance and a solution of the associated PCMCA-WT problem are depicted in Figure \ref{fig-rsa-pcmca}.}
		
		{Conversely, assume that there is a feasible solution of PCMCA-WT with cost at most $k$. Without loss of generality suppose that such a solution is optimal. Note that a path starting at $P_{FAR}$ and passing through a vertex in $P$ cannot exist due to the precedence constraints. Besides, every leaf of the arborescence that is in $S$ must have $P_{FAR}$ as parent; otherwise, making $P_{FAR}$ its parent would reduce the cost. Therefore, removing all the leaves of the PCMCA-WT arborescence connected through $P_{FAR}$ results in a tree that uses only arcs in $A'$ and whose leaves are all in $P$. It follows that the resulting tree is a feasible solution for the RSA.}

	\end{sloppypar}
\end{proof}

\subsection{MILP Models}
\label{sec-pcmrt-models}

This section introduces three different MILP models for formulating the \textit{Precedence-Constrained Minimum-Cost Arborescence Problem with Waiting Times}. For all the models, let $d_j$ be a variable associated with every vertex $j \in V$ to represent the time at which the flow {enters} vertex $j$, {with $d_r = 0$}. {The value of $d_j$ is bounded {from below} by summing the {time} from $r$ to the parent $i$ of $j$ and the cost of the arc $(i, j) \in A$ that is part of the arborescence. To ensure that the resulting arborescence satisfies the precedence constraints, we enforce that the {time}  from $r$ to $t$ is greater than or equal to the {time}  from $r$ to $s$ for all $(s,t) \in R$.} A variable $x_{ij}$ is associated with every arc $(i,j) \in A$ such that $x_{ij} = 1$ if $(i,j) \in T$ and 0 otherwise, where $T$ is the resulting optimal arborescence.

In all the models proposed for the PCMCA-WT, the value of $M$, which is an upper bound on the value of {an} optimal solution, is equal to the solution cost of solving the instance as a Sequential Ordering Problem (SOP) \cite{ref-montemanni2} using a nearest neighbor algorithm \cite{ref-shobaki}. This is a valid upper bound on the solution for the PCMCA-WT, since a valid solution for the SOP consists of a simple directed path that includes all the vertices of the graph such that $t$ never precede $s$ for all $(s,t) \in R$, which implies that $d_t \geq d_s$ for all $(s,t) \in R$, and the waiting time on each vertex is equal to zero.

\subsubsection{A Multi-Commodity Flow Model}
\label{sec-pcmrt-NF-models}

The model introduced in this section extends the one introduced in \cite{ref-dellamico} for the PCMCA, {and formulates the sub-problem of finding an arborescence rooted at $r$ that does not violate precedence relationships in $R$ as a \textit{multi-commodity flow problem}. The model uses a polynomial set of constraints instead of inequalities (\ref{eqn-3}) to ensure that every vertex in the graph is reachable from the root, and that for any $(s,t) \in R$ there is no path from $r$ to $s$ that passes through $t$ in the resulting arborescence. This can be ensured by having a flow value of 1 that enters every vertex $k$ in the graph, and that for any vertex $k$ the flow to that vertex does not pass through a successor of $k$.} Let $y^{k}_{ij}$ be a variable associated with every vertex $k \in V \backslash \{r\}$ and every arc $(i,j) \in A$, such that $y^{k}_{ij} = 1$ if arc $(i,j) \in A$ is part of the path from the root $r$ to vertex $k$, and 0 otherwise. Let $w_i$ be the waiting time at vertex $i \in V$.

\allowdisplaybreaks
\begin{align}
	\text{(MCF)} \,\, \text{minimize} \mkern100mu & \mkern-100mu \sum_{\substack{(i,j) \in A}} c_{ij} x_{ij} + \sum_{i \in V} w_i
	\label{eqn-5}
	\\
	\text{subject to} \mkern150mu & \mkern-150mu \sum_{(i,j) \in A} x_{ij} = 1 \mkern260mu \forall j \in V \backslash \{r\}
	\label{eqn-6}
	\\
	& \mkern-150mu \sum_{\substack{(i,j) \in A : \\ (k, j) \notin R}} y^{k}_{ij} - \sum_{\substack{(j,i) \in A : \\ (k, j) \notin R}} y^{k}_{ji} = \left\{
	\begin{array}{l}
		1 \mbox{ if } i = r \\
		\mkern-14mu -1 \mbox{ if } i = k \\
		 0 \mbox{ otherwise}
	\end{array} \right.
	\mkern-15mu \begin{array}{r} 
		\forall k \in V \backslash \{r\}, \\
		\forall i \in V: (k,i) \notin R
	\end{array}
	\label{eqn-7}
	\\
	& \mkern-150mu d_r = 0 
	\label{eqn-8}
	\\
	& \mkern-150mu {d_j \geq d_i - M + (M + c_{ij})x_{ij} \mkern160mu \forall (i,j) \in A}
	\label{eqn-9}
	\\
	& \mkern-150mu w_j \geq d_j - d_i - M + (M - c_{ij})x_{ij} \mkern118mu \forall (i,j) \in A
	\label{eqn-10}
	\\
	& \mkern-150mu d_t \geq d_s \mkern320mu \forall (s, t) \in R
	\label{eqn-11}
	\\
	& \mkern-150mu y^{k}_{ij} \leq x_{ij} \mkern220mu \forall k \in V \backslash \{r\}, (i,j) \in A
	\label{eqn-12}
	\\
	& \mkern-150mu x_{ij} \in \{0,1\} \mkern288mu \forall (i,j) \in A
	\label{eqn-13}
	\\
	& \mkern-150mu y^{k}_{ij} \in \{0, 1\} \mkern200mu \forall k \in V \backslash \{r\}, (i,j) \in A
	\label{eqn-14}
	\\
	& \mkern-150mu d_i, w_i \geq 0 \mkern332mu \forall i \in V
	\label{eqn-15}
\end{align}

Constraints (\ref{eqn-6}) impose the first property of an arborescence namely that every vertex $v \in V \backslash \{r\}$ must have a single parent.
{Constraints (\ref{eqn-7}) are the multi-commodity flow constraints: every vertex $k \in V$ must be reachable from the root, and any path from $r$ to $k$ must not pass through the successors of $k$ in the precedence graph $P$  (otherwise this would violate a precedence relation).} Constraint (\ref{eqn-8}) sets the distance from the root $r$ to itself to be equal to 0. {Constraints (\ref{eqn-9}) impose that when arc $(i,j)$ is selected to be part of the arborescence, then the time at which the flow enters vertex $j$ is greater than or equal to the time at which the flow enters vertex $i$ plus $c_{ij}$. Constraints (\ref{eqn-10}) enforce that the waiting time at {each} vertex $j$ is greater than or equal to the difference between the time at which the flow {enters} vertex $j$ and the time at which the flow {enters} vertex $i$ plus $c_{ij}$, where $i$ is the parent of $j$ in the arborescence. Constraints (\ref{eqn-11}) enforce that the time at which the flow {enters} vertex $t$ must be greater than or equal to the time at which the flow {enters} vertex $s$, for all $(s,t) \in R$. Finally, constraints (\ref{eqn-12})-(\ref{eqn-15}) define the domain of the variables. The MILP model proposed has $O(|V||A|)$ variables, and $O(|V||A|)$ constraints.

The major drawback of this model is the large number of variables used which might result in memory issues when solving large-sized instances, similar to what happens in the model proposed in \cite{ref-dellamico} for the PCMCA.

\subsubsection{A Distance-Accumulation Model}
\label{sec-pcmrt-DA-models}

The model introduced in this section extends the model introduced in Section \ref{sec-pcmca-model} for the PCMCA. {As mentioned earlier,} the {time}  from the root $r$ to vertex $j$ in the arborescence is bounded from below  by summing the {time}  from $r$ to the parent $i$ of $j$ and the cost of the arc $(i, j) \in A$, with $d_r = 0$. To ensure that the resulting arborescence satisfies the precedence constraints, we enforce that the {time}  from $r$ to $t$ is greater than or equal to the {time}  from $r$ to $s$ for all $(s,t) \in R$. {We recall that} $w_i$ is the waiting time at vertex $i \in V$.

\begin{align}
	\text{(DA)} \,\, \text{minimize} & \sum_{\substack{(i,j) \in A}} c_{ij} x_{ij} + \sum_{i \in V} w_i 
	\label{eqn-16}
	\\
	\text{subject to} & \sum_{(i,j) \in A} x_{ij} = 1 & \forall j \in V \backslash \{r\}
	\label{eqn-17}
	\\
	& \sum_{\substack{(i,k) \in A:\\ i \in V_j \backslash S, \, k \in S}} x_{ik} \geq 1 & \mkern-125mu \forall j \in V \setminus \{r\}, \forall S \subseteq V_j \backslash \{r\}: j \in S
	\label{eqn-18}
	\\
	& d_r = 0  
	\label{eqn-19}
	\\
	& d_j \geq d_i - M + (M + c_{ij})x_{ij} & \forall (i,j) \in A
	\label{eqn-20}
	\\
	& w_j \geq d_j - d_i - M + (M - c_{ij})x_{ij} & \forall (i,j) \in A
	\label{eqn-21}
	\\
	& d_t \geq d_s & \forall (s, t) \in R
	\label{eqn-22}
	\\
	& x_{ij} \in \{0,1\} & \forall (i,j) \in A
	\label{eqn-23}
	\\
	& d_i, w_i \geq 0 & \forall i \in V
	\label{eqn-24}
\end{align}

Constraints (\ref{eqn-17}) impose the first property of an arborescence, namely that every vertex $v \in V \backslash \{r\}$ must have a single parent. Constraints (\ref{eqn-18}) model the connectivity constraint, that is every vertex $v \in V \backslash \{r\}$ must be reachable from the root, and they also impose the precedence constraints where the resulting arborescence should not include vertex $t$ in the directed path connecting $r$ to $s$ when $(s,t) \in R$. This will lead to an arborescence such that the flow never {enters} $t$ before {entering} $s$, if $s$ precedes $t$ on the same directed path. Constraint (\ref{eqn-19}) sets the distance from the root $r$ to itself to be equal to 0. Constraints (\ref{eqn-20}) impose that when arc $(i,j)$ is selected to be part of the arborescence, then the time at which the flow {enters} vertex $j$ is greater than or equal to the time at which the flow {enters} vertex $i$ plus $c_{ij}$. Constraints (\ref{eqn-21}) enforce that the waiting time at vertex $j$ is greater than or equal to the difference between the time at which the flow {enters} vertex $j$ and the time at which the flow {enters} vertex $i$ plus $c_{ij}$. Constraints (\ref{eqn-22}) enforce that the time at which the flow {enters} vertex $t$ is greater than or equal to the time at which the flow {enters} vertex $s$ for all $(s,t) \in R$. Finally, constraints (\ref{eqn-23}) and (\ref{eqn-24}) define the domain of the variables. The MILP model proposed, {without constraints (\ref{eqn-18}),} has $O(|A|)$ variables, and $O(|A|)$ constraints. Constraints (\ref{eqn-18}) are dynamically added to the model using the same separation procedure described in Section \ref{sec-pcmca-model}.

\subsubsection{An Adjusted Arc-Cost Model}
\label{sec-alt-model}

{The model introduced in this section is originated by removing inequalities (\ref{eqn-21}) from the model introduced in Section \ref{sec-pcmrt-DA-models} and representing the value of $w_j$ by the nonlinear term}
\begin{align}
& {w_j = \sum_{i:(i,j) \in A} (d_j- d_i - c_{ij})x_{ij}}
\label{eqn-wj}
\end{align}

A different linear model is then derived as follows.

\begin{prop}
	\label{prop-2}
	The waiting time at vertex $j \in V$ can be expressed by the nonlinear {equality} (\ref{eqn-wj}).
\end{prop}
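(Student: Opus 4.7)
The plan is to exploit the arborescence structure, which forces the sum on the right-hand side of (\ref{eqn-wj}) to collapse to a single term. First I would observe that constraints (\ref{eqn-17}) guarantee that every vertex $j \in V \setminus \{r\}$ has exactly one parent in $T$; equivalently, there exists a unique $i^{\ast} \in V$ with $(i^{\ast},j) \in A$ and $x_{i^{\ast}j}=1$, while $x_{ij}=0$ for all other $i$ with $(i,j)\in A$. Substituting into the right-hand side of (\ref{eqn-wj}) therefore yields
\[
\sum_{i:(i,j)\in A} (d_j - d_i - c_{ij}) x_{ij} \;=\; d_j - d_{i^{\ast}} - c_{i^{\ast}j}.
\]

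Next I would tie this quantity to the waiting time $w_j$ as defined by the problem. By constraint (\ref{eqn-20}) applied to the active arc $(i^{\ast},j)$, we have $d_j \geq d_{i^{\ast}} + c_{i^{\ast}j}$, so the above expression is nonnegative. Moreover, from constraint (\ref{eqn-21}) applied to the same arc, $w_j \geq d_j - d_{i^{\ast}} - c_{i^{\ast}j}$, while the minimization objective (\ref{eqn-16}) drives $w_j$ down to its lower bound. Hence $w_j = d_j - d_{i^{\ast}} - c_{i^{\ast}j}$, which matches the collapsed sum exactly. This gives the claimed identity for every $j \in V \setminus \{r\}$.

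Finally, the root case $j = r$ is handled separately: since $(i,r) \notin A$ for all $i$, the sum on the right-hand side is empty and equals $0$, and by the assumption $d_r = 0$ together with the absence of precedences of the form $(s,r)$, no waiting is ever required at $r$, so $w_r = 0$ as well. The main subtlety, rather than an obstacle, is simply making explicit that the identity holds as an equality (not just an inequality) because of the interplay between constraint (\ref{eqn-21}) and the objective, and that it is precisely this feature that allows (\ref{eqn-21}) to be removed and replaced by the nonlinear expression (\ref{eqn-wj}).
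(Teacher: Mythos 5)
Your proof is correct and takes essentially the same approach as the paper's: both rest on observing that inequality (\ref{eqn-21}) is binding only on the unique active arc entering $j$, and that the minimization objective then forces $w_j$ down onto that lower bound, which collapses to the sum in (\ref{eqn-wj}). Your write-up is in fact more explicit than the paper's, which leaves implicit the collapse of the sum via the single-parent constraint, the nonnegativity of $d_j - d_{i^{\ast}} - c_{i^{\ast}j}$ guaranteed by (\ref{eqn-20}), and the root case $j=r$.
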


\begin{proof}
	Inequalities (\ref{eqn-21}) can be rewritten as $w_j \geq d_j - d_i - c_{ij} - M(1 - x_{ij})$ $\forall (i,j) \in A$. If $x_{ij} = 0$ then $w_j$ has to be greater than or equal to a negative value, however the value of $w_j$ should be greater than or equal to zero by definition. Accordingly, the inequality would be active and affect the solution only when $x_{ij} = 1$. {Therefore, we can represent the waiting time at vertex $j$ using equality (\ref{eqn-wj}).} 
\end{proof}

{Based on Proposition \ref{prop-2}, we can replace the second term in the objective function (\ref{eqn-16}) as follows:}
\begin{align*}
	{\sum_{j\in V} w_j = \sum_{j \in V} \sum_{i:(i,j)\in A} (d_j - d_i - c_{ij})x_{ij} = \sum_{(i,j)\in A} (d_j - d_i - c_{ij})x_{ij}}
\end{align*}
{This means that inequalities (\ref{eqn-21}) are no longer necessary as the objective function no longer depends on $w$, which results in the following nonlinear model.}

\allowdisplaybreaks
\begin{align}
	\text{minimize} & \sum_{(i,j) \in A} c_{ij} x_{ij} + \sum_{(i,j) \in A} (d_j - d_i - c_{ij})x_{ij}
	\label{eqn-25}
	\\
	\text{subject to} & \sum_{(i,j) \in A} x_{ij} = 1 & \forall j \in V \backslash \{r\}
	\label{eqn-26}
	\\
	& \sum_{\substack{(i,k) \in A:\\ i \in V_j \backslash S, \, k \in S}} x_{ik} \geq 1 & \mkern-110mu \forall j \in V \setminus \{r\}, \forall S \subseteq V_j \backslash \{r\}: j \in S
	\label{eqn-27}
	\\
	& d_r = 0
	\label{eqn-28}
	\\
	& d_j \geq d_i - M + (M + c_{ij})x_{ij} & \forall (i,j) \in A
	\label{eqn-29}
	\\
	& d_t \geq d_s & \forall (s, t) \in R
	\label{eqn-30}
	\\
	& x_{ij} \in \{0,1\} & \forall (i,j) \in A
	\label{eqn-31}
	\\
	& d_i \geq 0 & \forall i \in V
	\label{eqn-32}
\end{align}

\begin{prop}
	Using a new set of $|A|$ variables $z$ and $2|A|$ new constraints, the objective function (\ref{eqn-25}) can be linearized as follows:
	\begin{align*}
		\text{minimize} \, \sum_{j \in V \backslash \{r\}} d_j - \sum_{(i,j) \in A} z_{ij}
	\end{align*}
	\label{prop-3}
\end{prop}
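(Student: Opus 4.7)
The plan is to manipulate the objective (\ref{eqn-25}) algebraically using the single-parent property (\ref{eqn-26}), and then linearize the one remaining bilinear term with standard upper-bound inequalities.

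First I would expand the second sum in (\ref{eqn-25}) into $\sum_{(i,j)\in A} c_{ij} x_{ij} + \sum_{(i,j)\in A} d_j x_{ij} - \sum_{(i,j)\in A} d_i x_{ij} - \sum_{(i,j)\in A} c_{ij} x_{ij}$, in which the two $c_{ij} x_{ij}$ contributions cancel. Invoking constraints (\ref{eqn-26}), every $j \in V\setminus\{r\}$ has exactly one active incoming arc, so $\sum_{(i,j)\in A} d_j x_{ij} = \sum_{j\in V\setminus\{r\}} d_j$. This reduces the objective to $\sum_{j\in V\setminus\{r\}} d_j - \sum_{(i,j)\in A} d_i x_{ij}$, which is still nonlinear through the bilinear products $d_i x_{ij}$.

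Second I would introduce, for each arc $(i,j)\in A$, an auxiliary variable $z_{ij}$ together with the two inequalities $z_{ij} \leq d_i$ and $z_{ij} \leq M x_{ij}$. This costs exactly $|A|$ variables and $2|A|$ constraints, matching the count announced in the statement. I would then replace $\sum_{(i,j)\in A} d_i x_{ij}$ by $\sum_{(i,j)\in A} z_{ij}$ in the objective, obtaining the target expression $\sum_{j\in V\setminus\{r\}} d_j - \sum_{(i,j)\in A} z_{ij}$.

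Third I would verify that this substitution preserves optimal values. Since $-z_{ij}$ appears in a minimization, each $z_{ij}$ is driven to its upper bound $\min\{d_i, M x_{ij}\}$ at optimum. If $x_{ij}=0$ this upper bound is $0$, because $d_i \geq 0$ by (\ref{eqn-32}); if $x_{ij}=1$ it equals $d_i$, because the same $M$ used in (\ref{eqn-29}) is a valid upper bound on the overall completion time and therefore on any individual $d_i$. In both cases $z_{ij}=d_i x_{ij}$ at optimum, so the new linear objective coincides with (\ref{eqn-25}) on any optimal solution of the linearized model. The only delicate point I anticipate is making the inequality $M \geq d_i$ explicit and observing that the optimization direction is what forces $z_{ij}$ up to its upper envelope; beyond that, the derivation is a straightforward algebraic simplification followed by a textbook linearization.
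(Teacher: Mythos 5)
Your proposal is correct and follows essentially the same route as the paper's proof: the same cancellation of the $c_{ij}x_{ij}$ terms, the same use of constraints (\ref{eqn-26}) to collapse $\sum_{(i,j)\in A} d_j x_{ij}$ into $\sum_{j\in V\setminus\{r\}} d_j$, and the same linearization of $d_i x_{ij}$ via $z_{ij}\leq d_i$ and $z_{ij}\leq M x_{ij}$. If anything, your third step is slightly more explicit than the paper's, since you spell out that the minimization direction drives each $z_{ij}$ to $\min\{d_i, M x_{ij}\}$, which is exactly why the substitution is exact at optimality.
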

\begin{proof}
	The objective function (\ref{eqn-25}) can be rewritten as follows:
	
	\begin{equation}
	\begin{aligned}
		& \sum_{(i,j) \in A} c_{ij} x_{ij} + \sum_{(i,j) \in A} (d_j - d_i - c_{ij})x_{ij} =
		\\
		& \sum_{(i,j) \in A} d_j x_{ij} - \sum_{(i,j) \in A} d_i x_{ij}
		\,\, = \sum_{j \in V \backslash \{r\}} d_j - \sum_{(i,j) \in A} d_i x_{ij}
	\end{aligned}
	\label{eqn-prop}
	\end{equation}
	We use the fact that $\sum_{(i,j) \in A} d_j x_{ij} = \sum_{j \in V \backslash \{r\}} d_j$ as each $j \in V \backslash \{r\}$ has exactly one $x_{ij}$ assigned to 1 in an arborescence, as imposed by (\ref{eqn-26}). 
	
	\noindent
	Since the term $d_i x_{ij}$ is summed over each arc $(i,j) \in A$, then we need at least $2|A|$ constraints to linearize the product. We can substitute each term $d_i x_{ij}$ by a new continuous variable $z_{ij}$ and the following two inequalities:  
	\begin{align}
	&z_{ij} \leq M x_{ij} & \forall (i,j) \in A  \label{eqn-n2}\\
	 &z_{ij} \leq d_i & \forall (i,j) \in A \label{eqn-n1}
	\end{align}
	Inequalities (\ref{eqn-n2}) ensure that if $x_{ij} = 0$ then $z_{ij} = 0$. On the other hand, if $x_{ij} = 1$, then inequalities (\ref{eqn-n2}) ensure that $z_{ij}$ is less than the upper bound on the optimal solution which is further tightened by inequalities (\ref{eqn-n1}). This results in a total of $2|A|$ new constraints and (\ref{eqn-25}) can now be expressed as $\sum_{j \in V \backslash \{r\}} d_j - \sum_{(i,j) \in A} z_{ij}$ by elaborating on (\ref{eqn-prop}).  
\end{proof}
	
	Based on Proposition \ref{prop-3}, we can derive the following MILP model that contains $O(|A|)$ variables, and $O(|A|)$ constraints, plus an exponential number of constraints (\ref{eqn-27}).

\begin{align}
	\text{(AAC)} \,\, \text{minimize} & \sum_{j \in V \backslash \{r\}} d_j - \sum_{(i,j) \in A} z_{ij} 
	\label{eqn-33}
	\\
	\text{subject to} & \sum_{(i,j) \in A} x_{ij} = 1 & \forall j \in V \backslash \{r\}
	\label{eqn-34}
	\\
	& \sum_{\substack{(i,k) \in A:\\ i \in V_j \backslash S, \, k \in S}} x_{ik} \geq 1 & \mkern-90mu \forall j \in V \setminus \{r\}, \forall S \subseteq V_j \backslash \{r\}: j \in S
	\label{eqn-35}
	\\
	& d_r = 0
	\label{eqn-36}
	\\
	& d_j \geq d_i - M + (M + c_{ij})x_{ij} & \forall (i,j) \in A
	\label{eqn-37}
	\\
	& d_t \geq d_s & \forall (s, t) \in R
	\label{eqn-38}
	\\
	& z_{ij} \leq d_i & \forall (i,j) \in A
	\label{eqn-39}
	\\
	& z_{ij} \leq M x_{ij} & \forall (i,j) \in A 
	\label{eqn-40}
	\\
	& x_{ij} \in \{0,1\} & \forall (i,j) \in A
	\label{eqn-41}
	\\
	& z_{ij} \geq 0 & \forall (i,j) \in A
	\label{eqn-42}
	\\
	& d_i \geq 0 & \forall i \in V
	\label{eqn-43}
\end{align}

\begin{prop}
	\label{prop-4}
	The following inequalities are valid for the (AAC) model:
	\begin{align}
		\sum_{i \in V: (i,j) \in A} z_{ij} \leq d_j - \sum_{(i,j) \in A} c_{ij}x_{ij} & \qquad \forall j \in V \backslash \{r\}
		\label{eqn-44}
	\end{align}
\end{prop}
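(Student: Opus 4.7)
The plan is to prove validity by fixing an arbitrary $j \in V \setminus \{r\}$ and exploiting the fact that in any integer feasible solution exactly one incoming arc at $j$ is active, which collapses both sums on the two sides of (\ref{eqn-44}) to a single term. Rearranged, the claim becomes
\begin{align*}
\sum_{i : (i,j) \in A} \bigl(z_{ij} + c_{ij} x_{ij}\bigr) \leq d_j,
\end{align*}
so it suffices to bound this left-hand side by $d_j$ using constraints (\ref{eqn-34}), (\ref{eqn-37}), (\ref{eqn-39}), (\ref{eqn-40}) and (\ref{eqn-42}).

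First I would invoke (\ref{eqn-34}) to identify the unique predecessor $i^\star$ of $j$ with $x_{i^\star j} = 1$ and $x_{ij} = 0$ for every $i \neq i^\star$ with $(i,j) \in A$. Constraint (\ref{eqn-40}) then forces $z_{ij} \leq 0$ for each $i \neq i^\star$, while (\ref{eqn-42}) gives $z_{ij} \geq 0$; hence $z_{ij} = 0$ for all such $i$, and the left-hand side reduces to $z_{i^\star j} + c_{i^\star j}$.

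Next I would combine (\ref{eqn-39}) at $(i^\star, j)$, which gives $z_{i^\star j} \leq d_{i^\star}$, with (\ref{eqn-37}) evaluated at $x_{i^\star j} = 1$, which yields $d_j \geq d_{i^\star} + c_{i^\star j}$. Chaining these two bounds produces $z_{i^\star j} + c_{i^\star j} \leq d_{i^\star} + c_{i^\star j} \leq d_j$, which is exactly the desired inequality after substituting back the collapsed sums. Since $j$ was arbitrary, (\ref{eqn-44}) holds for every integer feasible solution of (AAC), establishing validity.

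There is essentially no serious obstacle here: the argument is a short case analysis driven by the single-parent constraint (\ref{eqn-34}). The only subtle point to handle cleanly in writing is making explicit that both the upper bound $z_{ij} \leq M x_{ij}$ from (\ref{eqn-40}) and the nonnegativity from (\ref{eqn-42}) are needed to eliminate the inactive terms; the rest is a direct telescoping of the big-$M$ inequality (\ref{eqn-37}) and the linearization bound (\ref{eqn-39}).
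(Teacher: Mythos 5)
Your proposal is correct and follows essentially the same argument as the paper: both exploit the single-parent constraint (\ref{eqn-34}) to collapse the sums to the unique active arc, bound the corresponding $z$ variable by $d_{i^\star}$ via the linearization constraints (\ref{eqn-39})--(\ref{eqn-40}), and chain this with the big-$M$ inequality (\ref{eqn-37}). The paper merely phrases the same reasoning through the intermediate quadratic inequality $d_j \geq \sum_i d_i x_{ij} + \sum_i c_{ij}x_{ij}$ and the bound $z_{ij} \leq d_i x_{ij}$, whereas you make the case analysis on active versus inactive arcs explicit; the substance is identical.
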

\begin{proof}
	Since for each vertex $j \in V \backslash \{r\}$ there is only one active arc $(i, j) \in A$ entering $j$ (from inequalities (\ref{eqn-34})), from inequalities (\ref{eqn-37}) we can derive the following new quadratic inequalities:
	\begin{align}
		d_j \geq \sum_{i \in V: (i,j) \in A} d_i x_{ij} + \sum_{(i,j) \in A} c_{ij} x_{ij} & \qquad \forall j \in V \backslash \{r\}
		\label{eqn-45}
	\end{align}
	From inequalities (\ref{eqn-39}) and (\ref{eqn-40}) we have $z_{ij} \leq d_i x_{ij}$ (see Proposition \ref{prop-3}), then inequality (\ref{eqn-44}) can be derived from inequality (\ref{eqn-45}) as follows.
	\begin{align*}
		d_j \geq \sum_{i \in V: (i,j) \in A} d_i x_{ij} + \sum_{(i,j) \in A} c_{ij} x_{ij} & \implies
		\sum_{i \in V: (i,j) \in A} d_i x_{ij} \leq d_j - \sum_{(i,j) \in A} c_{ij} x_{ij} 
		\\ 
		& \implies \sum_{i \in V: (i,j) \in A} z_{ij} \leq d_j - \sum_{(i,j) \in A} c_{ij}x_{ij}
		\\
		& \implies d_j \geq \sum_{i \in V: (i,j) \in A} z_{ij} + \sum_{(i,j) \in A} c_{ij}x_{ij}
	\end{align*} 
\end{proof}

It should be noted that inequalities (\ref{eqn-44}) are not an integral part of the $AAC$ model, but are added to have a stronger linear relaxation. If the inequalities are not included in the model, then the value of the $z_{ij}$s can be substantially larger than the value of the $d_j$s in order to minimize the value of the objective function. {This could result in feasible solutions of the linear relaxation with a negative objective function}. This would make the MILP much harder to solve. {Therefore}, inequalities (\ref{eqn-44}) are considered for all the experiments reported in this paper.

\section{Experimental Results}
\label{sec-results}

The computational experiments for evaluating the proposed models are based on the benchmark instances of TSPLIB \cite{ref-reinelt}, SOPLIB \cite{ref-montemanni} and COMPILERS \cite{ref-shobaki} originally proposed for the SOP \cite{ref-SOP}. The benchmark instances are the same instances previously adopted in \cite{ref-dellamico}.

All the experiments are performed on an Intel i7 processor running at 1.8 GHz with 8 GB of RAM. CPLEX 12.8\footnote{IBM ILOG CPLEX Optimization Studio: \url{https://www.ibm.com/products/ilog-cplex-optimization-studio}} is used for solving the MILPs. CPLEX is run with its default parameters, and single threaded standard Branch-and-Cut (B\&C) algorithm is applied for solving the MILP models, with \emph{BestBound} node selection, and MIP emphasis set to \emph{MIPEmphasisOptimality}. A time limit of 1 hour is {set for} the computation time for each computational (new) method/instance. {No time limit was instead considered for the computational time of the \textit{Path-Based Model} (see \cite{ref-dellamico})}.

In all the tables that follow, \textit{Name} and \textit{Size} columns report the name and size of the instance, \textit{Density of P} reports the density of arcs in the precedence graph computed as \mbox{\large$\frac{2 \cdot |R|}{|V|(|V| - 1)}$}, {$z^*$ reports the value of the optimal solution for that instance.} For each model we report the following columns. \textit{Cuts} column reports the number of model-dependent cuts (inequalities) that are dynamically added to the model, \textit{Nodes} column reports the number of nodes in the search decision-tree, \textit{Time (s)} reports the solution time in seconds. The same set of columns is reported for both the results of the model's linear relaxation (grouped under $LR$), and for the mixed integer linear programming model (grouped under $IP$).
 
\subsection{Computational Results for the PCMCA}

A MILP model for the PCMCA was previously proposed in \cite{ref-dellamico}, where precedence constraints are imposed by propagating a value along every path with end-points $s$ and $t$ for $(s,t) \in R$ in order to detect a precedence violation. This results in a cubic number of variables (a variable for each precedence relationship and vertex), and a quadratic number of constraints for the value propagation. The model is known to suffer from scalability and performance issues \cite{ref-dellamico}. Tables \ref{table-comparison-tsplib}-\ref{table-comparison-compilers} report the overall results of the model proposed in Section \ref{sec-pcmca}, that will be named \textit{Set-Based Model}, and compare its results with the results obtained by the model previously proposed in \cite{ref-dellamico}, that is here named \textit{Path-Based Model}. {In Tables \ref{table-comparison-tsplib}-\ref{table-comparison-compilers}, the \textit{Gap} column indicates the percentage relative difference between the optimal solution {($z^*$)} of the PCMCA instance and the objective function value of the model's linear relaxation ($Cost_{LR}$), computed as {$100 \cdot \frac{z^* - Cost_{LR}}{z^*}$.}

An overview of the results for the \textit{Path-Based Model} shows that its linear relaxation optimally solves 47\% of the instances with a 2.1\% average optimality gap. On the other hand, the linear relaxation of the \textit{Set-Based Model} optimally solves 68\% of the instances (a 44\% improvement compared to \textit{Path-Based Model}) with an average optimality gap of 1.7\% (a 23\% improvement compared to the \textit{Path-Based Model}). The solution times for the integer \textit{Path-Based Model} range between milliseconds and 2.5 hours (the maximum computing time allowed was longer in \cite{ref-dellamico}), with an average of 276 seconds, a median of 3 seconds, and standard deviation of 1116 seconds. The solution times for the integer \textit{Set-Based Model} range between milliseconds and 15 minutes, with an average of 27 seconds, a median of 0.8 seconds, and standard deviation of 129 seconds (this is on average a 90\% improvement compared to the \textit{Path-Based Model}). In the integer \textit{Set-Based Model}, the number of cuts generated by exploring the whole branch-decision-tree increases by 80\% on average, compared to the root of the branch-decision-tree itself, and the solver explores 77 nodes on average. {On the other hand, for the integer \textit{Path-Based Model} the solver explores 5588 nodes on average (a 98\% increase).}

By inspecting Tables \ref{table-comparison-tsplib}-\ref{table-comparison-compilers} we can observe that the \textit{Path-Based Model} from \cite{ref-dellamico} optimally solves a subset of the instances faster than the \textit{Set-Based Model}. We can see that those instances (underlined in the tables) are relatively large in size and have either a very sparse or very dense precedence graph. More specifically, in terms of size the \textit{Path-Based Model} is faster at solving 54\% of the instances that have a size larger than 500. In terms of precedence graph density, the \textit{Path-Based Model} is faster at solving 62\% of the instance with density smaller than 0.005 and is faster at solving 57\% of the instances with density larger than 0.990. Considering the two factors simultaneously, the \textit{Path-Based Model} is faster at solving 57\% of the instances with size larger than 500 and precedence graph density that is smaller than 0.008 or larger than 0.940. A low density precedence graph implies a small number of variables and constraints used to model the precedence relationships in the \textit{Path-Based Model}, and since finding a violated precedence inequality is much faster in that model, it is sometimes more efficient at solving those instances. In other instances, the increase in solution time is justified by the time it takes to find a violated inequality in the \textit{Set-Based Model}. In general, if we look at Figure \ref{Fig-P1-Solutiontimes}, which shows the distribution of solution times for each model, we see that the \textit{Set-Based Model} is much faster at solving the instances, even when we consider or exclude outliers. The large solution time in the \textit{Set-Based Model} for the two instances \emph{prob.100} and \emph{R.700.100.1}, compared to the \textit{Path-Based Model}, can be explained by the number of cuts generated while solving the LR, which also increases the overall solution time. We can verify that by observing the solution time of the LR for the first instance.

\begin{figure}[]
	\resizebox{\textwidth}{!}{
		\includegraphics{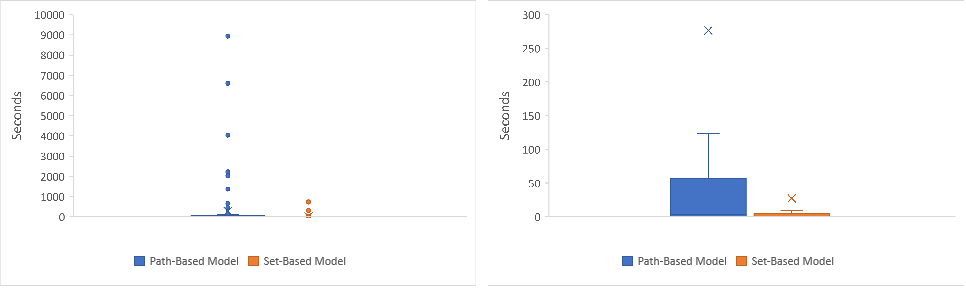}
	}
	\centering
	\caption{A box plot showing the distribution of solution times (in seconds) of all the 116 instances for the \textit{Path-Based Model} from \cite{ref-dellamico} and the \textit{Set-Based Model}. The box plot on the right excludes outliers.}
	\label{Fig-P1-Solutiontimes}
\end{figure}

In conclusion, the \textit{Set-Based Model} is a significant improvement over previous methods. Indeed, it provides optimal solutions in substantially less time, and memory usage for the majority of the instances considered. The same cannot be said about the \textit{Set-Based Model} when linear relaxations only are considered, as the \textit{Path-Based Model} can be solved much faster in most cases because of the fewer number of constraints, although it sometimes generates a looser estimate on the value of the optimal integer solution. In terms of memory usage, the \textit{Set-Based Model} consumes approximately an average of 95MB, with a standard deviation of 132 and median of 41 when solving the instances considered. On the other hand, the \textit{Path-Based Model} consumes approximately an average of 363MB, with a standard deviation of 486 and median of 116. For considerably large sized instances such as \emph{R.700.100.30} and \emph{R.700.100.60}, the \textit{Path-Based Model} consumes 1841MB and 1097MB for each of those two instances, whereas the \textit{Set-Based Model} consumes 263MB and 286MB for the same instances. The two instances considered are solved at the root node of the branch-decision tree {by both models}.

\subsection{Computational Results for the PCMCA-WT}

For the computational experiments of the PCMCA-WT, we omit the {detailed} results for SOPLIB benchmark instances. {However, we can draw the following conclusions on these instances}. The \textit{Multi-Commodity Flow} (MCF) model is unable to solve large sized instances because of memory issues (building the model consumes around 5GB of memory on average) or time out while solving the model's linear relaxation. Since the linear relaxation of MCF model was unable to solve {any} single instance from SOPLIB benchmark set, we concluded that it is highly unsuitable for solving such instances. The characteristics of SOPLIB instances are summarized in Table \ref{table-comparison-soplib}. The \textit{Distance-Accumulation} (DA) model and the \textit{Adjusted Arc-Cost} (AAC) model are able to optimally solve SOPLIB instances with low density precedence graphs within the time limit, with an average of 800 seconds, and achieve an average optimality gap of 63.8\% for the LR models and 63.3\% for the IP models before timing out for the remaining instances. These figures are much higher compared to the other two benchmark sets as will be shown later. The computational experiments have shown that large-sized instances with a highly dense precedence graph are outside the reach of the models proposed due to the intrinsic complexity of the problem.

\begin{landscape}

\begin{table}[t]
	\caption{Overall computational results for comparing the \textit{Path-Based Model} from \cite{ref-dellamico} and the \textit{Set-Based Model} for the PCMCA for TSPLIB instances}
	\label{table-comparison-tsplib}
	\resizebox{1\linewidth}{0.45\textwidth}{
		\begin{tabular}{lrrrrrrrrrrrrr}
			\hline
			\multicolumn{4}{c}{\multirow{2}{*}{Instance}}          & \multicolumn{4}{c}{Path-Based Model \cite{ref-dellamico}}                        & \multicolumn{6}{c}{Set-Based Model}                                 \\
			\cmidrule(r){5-8} \cmidrule(r){9-14}
			\multicolumn{4}{c}{}                                   & \multicolumn{2}{c}{LR}             & \multicolumn{2}{c}{IP} & \multicolumn{3}{c}{LR}                    & \multicolumn{3}{c}{IP} \\
			\cmidrule(r){1-4} \cmidrule(r){5-6} \cmidrule(r){7-8} \cmidrule(r){9-11} \cmidrule(r){12-14}
			\multicolumn{1}{l}{Name} & Size & Density of P & $z^*$ & \multicolumn{1}{l}{Time (s)} & Gap & Nodes    & Time (s)    & \multicolumn{1}{l}{Cuts} & Time (s) & Gap & Nodes & Cuts & Time (s) \\
			\hline
			br17.10 & 18 & 0.314 & 25 & 0.032 & 0.000 & 3 & 0.060 & 21 & 0.015 & 0.000 & 0 & 21 & 0.015 \\
			br17.12 & 18 & 0.359 & 25 & 0.047 & 0.000 & 3 & 0.063 & 22 & 0.016 & 0.000 & 0 & 22 & 0.016 \\
			ESC07 & 9 & 0.611 & 1531 & 0.031 & 0.000 & 0 & 0.031 & 13 & 0.031 & 0.000 & 0 & 13 & 0.031 \\
			ESC11 & 13 & 0.359 & 1752 & 0.031 & 0.000 & 0 & 0.031 & 1 & 0.031 & 0.000 & 0 & 1 & 0.031 \\
			ESC12 & 14 & 0.396 & 1138 & 0.016 & 0.000 & 0 & 0.016 & 1 & 0.016 & 0.000 & 0 & 1 & 0.016 \\
			ESC25 & 27 & 0.177 & 1041 & 0.062 & 0.000 & 0 & 0.062 & 31 & 0.063 & 0.000 & 0 & 31 & 0.063 \\
			ESC47 & 49 & 0.108 & 703 & 0.484 & 0.284 & 5 & 0.469 & 142 & 0.547 & 0.000 & 0 & 142 & 0.547 \\
			ESC63 & 65 & 0.173 & 56 & 0.329 & 0.000 & 0 & 0.329 & 42 & 0.218 & 0.000 & 0 & 42 & 0.218 \\
			ESC78 & 80 & 0.139 & 502 & 0.094 & 0.000 & 0 & 0.094 & 1 & 0.047 & 0.000 & 0 & 1 & 0.047 \\
			ft53.1 & 54 & 0.082 & 3917 & 1.172 & 0.408 & 7 & 1.172 & 78 & 0.328 & 0.230 & 5 & 84 & 0.375 \\
			ft53.2 & 54 & 0.094 & 3978 & 0.281 & 7.642 & 104 & 0.688 & 57 & 0.188 & 2.765 & 55 & 211 & 0.547 \\
			ft53.3 & 54 & 0.225 & 4242 & 1.890 & 5.587 & 122 & 2.547 & 96 & 0.453 & 0.000 & 0 & 96 & 0.453 \\
			ft53.4 & 54 & 0.604 & 4882 & 0.156 & 2.663 & 9 & 0.250 & 13 & 0.047 & 0.000 & 0 & 13 & 0.047 \\
			ft70.1 & 71 & 0.036 & 32846 & 2.891 & 0.000 & 1 & 2.828 & 158 & 2.750 & 0.000 & 0 & 158 & 2.750 \\
			ft70.2 & 71 & 0.075 & 32930 & 2.985 & 0.035 & 2 & 3.016 & 163 & 2.719 & 0.000 & 0 & 163 & 2.719 \\
			ft70.3 & 71 & 0.142 & 33431 & 0.750 & 2.423 & 954 & 63.171 & 66 & 0.265 & 2.034 & 145 & 2077 & 38.250 \\
			ft70.4 & 71 & 0.589 & 35179 & 13.015 & 0.584 & 53 & 13.438 & 30 & 0.094 & 2.146 & 369 & 1070 & 6.281 \\
			rbg048a & 50 & 0.444 & 204 & 0.047 & 0.000 & 0 & 0.047 & 5 & 0.031 & 0.000 & 0 & 5 & 0.031 \\
			rbg050c & 52 & 0.459 & 191 & 0.313 & 0.000 & 0 & 0.313 & 11 & 0.047 & 0.000 & 0 & 11 & 0.047 \\
			rbg109 & 111 & 0.909 & 256 & 11.578 & 0.000 & 0 & 11.578 & 14 & 0.094 & 0.000 & 0 & 14 & 0.094 \\
			rbg150a & 152 & 0.927 & 373 & 2.485 & 0.000 & 0 & 2.485 & 14 & 0.187 & 0.000 & 1 & 14 & 0.219 \\
			rbg174a & 176 & 0.929 & 365 & 29.610 & 0.274 & 2 & 29.609 & 22 & 0.297 & 0.000 & 1 & 22 & 0.313 \\
			rbg253a & 255 & 0.948 & 375 & 13.985 & 0.000 & 0 & 13.985 & 22 & 1.125 & 0.000 & 0 & 22 & 1.125 \\
			rbg323a & 325 & 0.928 & 754 & 1.547 & 0.000 & 0 & 1.547 & 26 & 1.047 & 0.000 & 0 & 26 & 1.047 \\
			rbg341a & 343 & 0.937 & 610 & 23.344 & 3.279 & 376 & 278.859 & 89 & 3.031 & 0.000 & 0 & 89 & 3.031 \\
			rbg358a & 360 & 0.886 & 595 & 0.312 & 0.000 & 0 & \underline{0.312} & 67 & 5.812 & 0.000 & 0 & 67 & 5.812 \\
			rbg378a & 380 & 0.894 & 559 & 16.079 & 3.936 & 543 & 178.515 & 21 & 1.829 & 4.472 & 36 & 282 & 19.047 \\
			kro124p.1 & 101 & 0.046 & 32597 & 0.734 & 5.997 & 47 & 1.844 & 95 & 1.782 & 0.000 & 0 & 95 & 1.782 \\
			kro124p.2 & 101 & 0.053 & 32851 & 0.578 & 6.929 & 1433 & 11.203 & 109 & 1.828 & 0.568 & 27 & 238 & 3.281 \\
			kro124p.3 & 101 & 0.092 & 33779 & 8.672 & 2.680 & 258648 & 6599.140 & 69 & 0.844 & 3.486 & 98 & 656 & 7.469 \\
			kro124p.4 & 101 & 0.496 & 37124 & 41.828 & 1.375 & 198 & 59.359 & 128 & 1.672 & 0.000 & 0 & 128 & 1.672 \\
			p43.1 & 44 & 0.101 & 2720 & 0.594 & 12.684 & 238 & 4.203 & 68 & 0.187 & 10.409 & 128 & 692 & 1.765 \\
			p43.2 & 44 & 0.126 & 2720 & 1.016 & 8.364 & 119 & \underline{1.781} & 33 & 0.079 & 11.029 & 237 & 1164 & 4.359 \\
			p43.3 & 44 & 0.191 & 2720 & 0.547 & 14.407 & 283 & 2.829 & 77 & 0.188 & 7.537 & 134 & 598 & 1.437 \\
			p43.4 & 44 & 0.164 & 2820 & 1.218 & 8.688 & 198 & 3.516 & 11 & 0.047 & 8.333 & 353 & 1065 & 2.797 \\
			prob.100 & 100 & 0.048 & 650 & 11.766 & 1.308 & 1428 & \underline{36.594} & 1840 & 622.437 & 0.240 & 4 & 1962 & 743.969 \\
			prob.42 & 42 & 0.116 & 143 & 0.125 & 0.000 & 0 & 0.125 & 2 & 0.032 & 0.000 & 0 & 2 & 0.032 \\
			ry48p.1 & 49 & 0.091 & 13095 & 0.828 & 0.886 & 879 & 1.656 & 31 & 0.094 & 1.894 & 54 & 177 & 0.609 \\
			ry48p.2 & 49 & 0.103 & 13103 & 1.031 & 0.551 & 220 & 1.593 & 58 & 0.235 & 0.000 & 0 & 58 & 0.235 \\
			ry48p.3 & 49 & 0.193 & 13886 & 2.109 & 3.657 & 123233 & 638.344 & 34 & 0.078 & 6.852 & 146 & 634 & 2.156 \\
			ry48p.4 & 49 & 0.588 & 15340 & 2.531 & 7.210 & 8610 & 24.156 & 65 & 0.172 & 5.847 & 32 & 153 & 0.313 \\
			Average &  &  &  & 4.808 & 2.484 & 9700 & 194.923 & 94 & 15.878 & 1.655 & 45 & 300 & 20.855 \\
			\hline
		\end{tabular}
	}
\end{table}

\begin{table}[t]
	\centering
	\caption{Overall computational results for comparing the \textit{Path-Based Model} from \cite{ref-dellamico} and \textit{Set-Based Model} for the PCMCA for SOPLIB instances}
	\label{table-comparison-soplib}
	\resizebox{1\linewidth}{0.45\textwidth}{
		\begin{tabular}{lrrrrrrrrrrrrr}
			\hline
			\multicolumn{4}{c}{\multirow{2}{*}{Instance}}          & \multicolumn{4}{c}{Path-Based Model \cite{ref-dellamico}}                        & \multicolumn{6}{c}{Set-Based Model}                                 \\
			\cmidrule(r){5-8} \cmidrule(r){9-14}
			\multicolumn{4}{c}{}                                   & \multicolumn{2}{c}{LR}             & \multicolumn{2}{c}{IP} & \multicolumn{3}{c}{LR}                    & \multicolumn{3}{c}{IP} \\
			\cmidrule(r){1-4} \cmidrule(r){5-6} \cmidrule(r){7-8} \cmidrule(r){9-11} \cmidrule(r){12-14}
			\multicolumn{1}{l}{Name} & Size & Density of P & $z^*$ & \multicolumn{1}{l}{Time (s)} & Gap & Nodes    & Time (s)    & \multicolumn{1}{l}{Cuts} & Time (s) & Gap & Nodes & Cuts & Time (s) \\
			\hline
			R.200.100.1 & 200 & 0.020 & 29 & 0.219 & 0.000 & 0 & 0.219 & 11 & 0.875 & 0.000 & 0 & 11 & 0.875 \\
			R.200.100.15 & 200 & 0.847 & 454 & 3235.391 & 5.740 & 382 & 4034.859 & 85 & 1.079 & 13.877 & 177 & 2395 & 64.812 \\
			R.200.100.30 & 200 & 0.957 & 529 & 12.922 & 11.153 & 59 & 54.828 & 39 & 0.266 & 9.263 & 10 & 77 & 0.875 \\
			R.200.100.60 & 200 & 0.991 & 6018 & 3.593 & 0.000 & 0 & 3.593 & 0 & 0.094 & 0.000 & 0 & 0 & 0.094 \\
			R.200.1000.1 & 200 & 0.020 & 887 & 0.203 & 0.000 & 0 & 0.203 & 3 & 0.656 & 0.000 & 0 & 3 & 0.656 \\
			R.200.1000.15 & 200 & 0.876 & 5891 & 203.234 & 4.261 & 132 & 329.313 & 35 & 0.766 & 5.568 & 87 & 557 & 7.860 \\
			R.200.1000.30 & 200 & 0.958 & 7653 & 56.000 & 0.026 & 2 & 57.141 & 9 & 0.234 & 0.000 & 0 & 9 & 0.297 \\
			R.200.1000.60 & 200 & 0.989 & 6666 & 3.797 & 0.000 & 0 & 3.797 & 0 & 0.094 & 0.000 & 0 & 0 & 0.094 \\
			R.300.100.1 & 300 & 0.013 & 13 & 0.500 & 0.000 & 0 & \underline{0.500} & 14 & 2.250 & 0.000 & 0 & 14 & 2.250 \\
			R.300.100.15 & 300 & 0.905 & 575 & 3.985 & 10.261 & 87859 & 2220.656 & 20 & 1.171 & 7.652 & 139 & 1111 & 55.734 \\
			R.300.100.30 & 300 & 0.970 & 756 & 1.672 & 0.000 & 0 & 1.672 & 27 & 0.562 & 0.000 & 0 & 27 & 0.562 \\
			R.300.100.60 & 300 & 0.994 & 708 & 1.531 & 0.000 & 2 & 2.469 & 2 & 0.297 & 0.000 & 0 & 2 & 0.375 \\
			R.300.1000.1 & 300 & 0.013 & 715 & 10.546 & 0.000 & 0 & 10.546 & 8 & 2.094 & 0.000 & 0 & 8 & 2.515 \\
			R.300.1000.15 & 300 & 0.905 & 6660 & 0.812 & 5.983 & 3304 & 91.938 & 136 & 2.610 & 0.811 & 73 & 819 & 16.531 \\
			R.300.1000.30 & 300 & 0.965 & 8693 & 1.531 & 0.000 & 0 & 1.531 & 6 & 0.391 & 0.000 & 0 & 6 & 0.453 \\
			R.300.1000.60 & 300 & 0.994 & 7678 & 23.234 & 0.000 & 0 & 23.234 & 2 & 0.297 & 0.000 & 0 & 2 & 0.297 \\
			R.400.100.1 & 400 & 0.010 & 6 & 0.391 & 0.000 & 0 & \underline{0.391} & 42 & 5.781 & 0.000 & 2 & 45 & 9.750 \\
			R.400.100.15 & 400 & 0.927 & 699 & 0.328 & 10.837 & 52858 & 2021.813 & 24 & 0.906 & 10.014 & 109 & 548 & 44.922 \\
			R.400.100.30 & 400 & 0.978 & 712 & 10.156 & 0.000 & 0 & 10.156 & 14 & 1.656 & 0.000 & 0 & 14 & 2.031 \\
			R.400.100.60 & 400 & 0.996 & 557 & 0.219 & 0.000 & 0 & 0.219 & 0 & 0.328 & 0.000 & 0 & 0 & 0.328 \\
			R.400.1000.1 & 400 & 0.010 & 780 & 6.734 & 0.000 & 0 & 6.734 & 4 & 2.797 & 0.000 & 0 & 4 & 2.797 \\
			R.400.1000.15 & 400 & 0.930 & 7382 & 0.625 & 8.467 & 56018 & 8935.188 & 78 & 5.375 & 2.181 & 91 & 362 & 24.000 \\
			R.400.1000.30 & 400 & 0.977 & 9368 & 34.531 & 1.057 & 4797 & 209.593 & 20 & 1.140 & 4.366 & 38 & 97 & 6.563 \\
			R.400.1000.60 & 400 & 0.995 & 7167 & 2.016 & 0.000 & 0 & 2.016 & 1 & 0.500 & 0.000 & 0 & 1 & 0.500 \\
			R.500.100.1 & 500 & 0.008 & 3 & 217.172 & 0.000 & 0 & 217.172 & 29 & 11.812 & 0.000 & 0 & 29 & 11.812 \\
			R.500.100.15 & 500 & 0.945 & 860 & 1.016 & 8.488 & 9879 & 443.125 & 100 & 7.406 & 3.895 & 38 & 286 & 21.156 \\
			R.500.100.30 & 500 & 0.980 & 710 & 14.453 & 3.099 & 11490 & 696.922 & 19 & 0.797 & 6.620 & 15 & 51 & 3.562 \\
			R.500.100.60 & 500 & 0.996 & 566 & 0.687 & 0.000 & 0 & \underline{0.687} & 1 & 0.844 & 0.000 & 0 & 1 & 0.844 \\
			R.500.1000.1 & 500 & 0.008 & 297 & 0.609 & 0.000 & 0 & \underline{0.609} & 0 & 4.469 & 0.000 & 0 & 0 & 4.469 \\
			R.500.1000.15 & 500 & 0.940 & 8063 & 82.015 & 0.000 & 57 & 100.640 & 119 & 15.063 & 0.000 & 0 & 119 & 15.063 \\
			R.500.1000.30 & 500 & 0.981 & 9409 & 11.141 & 0.000 & 0 & 11.141 & 11 & 3.125 & 0.000 & 0 & 11 & 3.125 \\
			R.500.1000.60 & 500 & 0.996 & 6163 & 0.671 & 0.000 & 0 & \underline{0.671} & 1 & 0.875 & 0.000 & 0 & 1 & 0.875 \\
			R.600.100.1 & 600 & 0.007 & 1 & 659.156 & 0.000 & 0 & \underline{659.156} & 1455 & 733.375 & 0.000 & 0 & 1455 & 733.375 \\
			R.600.100.15 & 600 & 0.950 & 568 & 31.516 & 0.000 & 1 & 34.985 & 23 & 5.312 & 0.000 & 0 & 23 & 5.312 \\
			R.600.100.30 & 600 & 0.985 & 776 & 13.484 & 1.675 & 659 & 298.109 & 24 & 2.375 & 0.000 & 0 & 24 & 2.375 \\
			R.600.100.60 & 600 & 0.997 & 538 & 0.359 & 0.000 & 0 & \underline{0.359} & 0 & 0.906 & 0.000 & 0 & 0 & 0.906 \\
			R.600.1000.1 & 600 & 0.007 & 322 & 0.844 & 0.000 & 0 & \underline{0.844} & 0 & 8.625 & 0.000 & 0 & 0 & 8.625 \\
			R.600.1000.15 & 600 & 0.945 & 9763 & 17.984 & 2.192 & 31 & 159.515 & 69 & 12.766 & 0.000 & 0 & 69 & 12.766 \\
			R.600.1000.30 & 600 & 0.984 & 9497 & 7.219 & 0.000 & 0 & 7.219 & 13 & 2.969 & 0.000 & 0 & 13 & 2.969 \\
			R.600.1000.60 & 600 & 0.997 & 6915 & 0.406 & 0.000 & 0 & \underline{0.406} & 0 & 0.922 & 0.000 & 0 & 0 & 0.922 \\
			R.700.100.1 & 700 & 0.006 & 2 & 1.250 & 0.000 & 0 & \underline{1.250} & 616 & 314.875 & 0.000 & 0 & 616 & 314.875 \\
			R.700.100.15 & 700 & 0.957 & 675 & 41.000 & 0.000 & 0 & 41.000 & 23 & 6.875 & 0.000 & 0 & 23 & 6.875 \\
			R.700.100.30 & 700 & 0.987 & 590 & 3.984 & 0.000 & 0 & 3.984 & 1 & 1.25 & 0.000 & 0 & 1 & 1.250 \\
			R.700.100.60 & 700 & 0.997 & 383 & 0.500 & 0.000 & 0 & 0.500 & 0 & 1.422 & 0.000 & 0 & 0 & 1.422 \\
			R.700.1000.1 & 700 & 0.006 & 611 & 1.625 & 0.000 & 0 & \underline{1.625} & 0 & 13.891 & 0.000 & 0 & 0 & 13.891 \\
			R.700.1000.15 & 700 & 0.956 & 2792 & 1.500 & 0.000 & 0 & \underline{1.500} & 4 & 1.875 & 0.000 & 0 & 4 & 1.875 \\
			R.700.1000.30 & 700 & 0.986 & 2658 & 0.360 & 0.000 & 0 & \underline{0.360} & 0 & 0.828 & 0.000 & 0 & 0 & 0.828 \\
			R.700.1000.60 & 700 & 0.997 & 1913 & 0.515 & 0.000 & 0 & \underline{0.515} & 0 & 1.375 & 0.000 & 0 & 0 & 1.375 \\
			Average &  &  &  & 98.409 & 1.526 & 4740 & 431.352 & 64 & 24.714 & 1.338 & 16 & 184 & 29.494 \\
			\hline
		\end{tabular}
	}
\end{table} 

\begin{table}[t]
	\centering
	\caption{Overall computational results for comparing the \textit{Path-Based Model} from \cite{ref-dellamico} and \textit{Set-Based Model} for the PCMCA for COMPILERS instances}
	\label{table-comparison-compilers}
	\resizebox{1\linewidth}{0.45\textwidth}{
		\begin{tabular}{lrrrrrrrrrrrrr}
			\hline
			\multicolumn{4}{c}{\multirow{2}{*}{Instance}}          & \multicolumn{4}{c}{Path-Based Model \cite{ref-dellamico}}                        & \multicolumn{6}{c}{Set-Based Model}                                 \\
			\cmidrule(r){5-8} \cmidrule(r){9-14}
			\multicolumn{4}{c}{}                                   & \multicolumn{2}{c}{LR}             & \multicolumn{2}{c}{IP} & \multicolumn{3}{c}{LR}                    & \multicolumn{3}{c}{IP} \\
			\cmidrule(r){1-4} \cmidrule(r){5-6} \cmidrule(r){7-8} \cmidrule(r){9-11} \cmidrule(r){12-14}
			\multicolumn{1}{l}{Name} & Size & Density of P & $z^*$ & \multicolumn{1}{l}{Time (s)} & Gap & Nodes    & Time (s)    & \multicolumn{1}{l}{Cuts} & Time (s) & Gap & Nodes & Cuts & Time (s) \\
			\hline
			gsm.153.124 & 126 & 0.970 & 185 & 0.578 & 0.000 & 0 & 0.578 & 49 & 0.125 & 1.081 & 3 & 53 & 0.140 \\
			gsm.444.350 & 353 & 0.990 & 1542 & 0.078 & 0.000 & 0 & \underline{0.078} & 0 & 0.094 & 0.000 & 0 & 0 & 0.094 \\
			gsm.462.77 & 79 & 0.840 & 292 & 3.422 & 0.000 & 17 & 4.047 & 14 & 0.031 & 0.000 & 0 & 14 & 0.031 \\
			jpeg.1483.25 & 27 & 0.484 & 71 & 0.234 & 0.000 & 43 & 0.266 & 21 & 0.031 & 0.000 & 4 & 34 & 0.047 \\
			jpeg.3184.107 & 109 & 0.887 & 411 & 14.640 & 0.487 & 24 & 16.844 & 32 & 0.093 & 0.000 & 0 & 32 & 0.093 \\
			jpeg.3195.85 & 87 & 0.740 & 13 & 278.844 & 38.462 & 4041 & 1366.985 & 45 & 0.125 & 38.462 & 5674 & 16979 & 897.312 \\
			jpeg.3198.93 & 95 & 0.752 & 140 & 252.734 & 2.857 & 2204 & 529.781 & 29 & 0.141 & 3.571 & 401 & 1686 & 9.704 \\
			jpeg.3203.135 & 137 & 0.897 & 507 & 47.578 & 0.394 & 31 & 56.703 & 18 & 0.094 & 2.170 & 7 & 41 & 0.125 \\
			jpeg.3740.15 & 17 & 0.257 & 33 & 1.782 & 3.030 & 231 & 0.234 & 17 & 0.031 & 0.000 & 0 & 17 & 0.031 \\
			jpeg.4154.36 & 38 & 0.633 & 74 & 0.641 & 5.405 & 1462 & 2.500 & 43 & 0.063 & 0.000 & 0 & 43 & 0.063 \\
			jpeg.4753.54 & 56 & 0.769 & 146 & 2.766 & 0.685 & 11 & 2.984 & 38 & 0.062 & 0.685 & 6 & 59 & 0.109 \\
			susan.248.197 & 199 & 0.939 & 588 & 76.329 & 0.340 & 22 & 106.672 & 21 & 0.125 & 0.000 & 0 & 21 & 0.125 \\
			susan.260.158 & 160 & 0.916 & 472 & 12.156 & 1.695 & 570 & 123.594 & 33 & 0.141 & 0.000 & 0 & 33 & 0.141 \\
			susan.343.182 & 184 & 0.936 & 468 & 194.188 & 1.068 & 776 & 474.391 & 47 & 0.203 & 0.962 & 19 & 89 & 0.359 \\
			typeset.10192.123 & 125 & 0.744 & 241 & 4.859 & 10.373 & 5565 & 297.859 & 93 & 0.500 & 0.000 & 0 & 93 & 0.500 \\
			typeset.10835.26 & 28 & 0.349 & 60 & 0.063 & 0.000 & 0 & 0.063 & 14 & 0.031 & 0.000 & 0 & 14 & 0.031 \\
			typeset.12395.43 & 45 & 0.518 & 125 & 0.531 & 0.800 & 10 & 0.437 & 27 & 0.078 & 0.000 & 0 & 27 & 0.078 \\
			typeset.15087.23 & 25 & 0.557 & 89 & 0.297 & 1.124 & 32 & 0.297 & 24 & 0.047 & 0.000 & 0 & 24 & 0.047 \\
			typeset.15577.36 & 38 & 0.555 & 93 & 0.031 & 0.000 & 0 & 0.031 & 4 & 0.015 & 0.000 & 0 & 4 & 0.015 \\
			typeset.16000.68 & 70 & 0.658 & 67 & 21.891 & 0.000 & 0 & 21.891 & 643 & 3.281 & 8.955 & 144 & 1316 & 7.172 \\
			typeset.1723.25 & 27 & 0.245 & 54 & 0.203 & 5.556 & 7660 & 4.094 & 19 & 0.031 & 5.556 & 21 & 99 & 0.110 \\
			typeset.19972.246 & 248 & 0.993 & 979 & 0.110 & 0.000 & 0 & 0.110 & 0 & 0.062 & 0.000 & 0 & 0 & 0.062 \\
			typeset.4391.240 & 242 & 0.981 & 837 & 378.172 & 0.119 & 46 & 6.250 & 18 & 0.094 & 0.000 & 0 & 18 & 0.094 \\
			typeset.4597.45 & 47 & 0.493 & 133 & 0.437 & 0.000 & 0 & 0.437 & 7 & 0.031 & 0.000 & 0 & 7 & 0.031 \\
			typeset.4724.433 & 435 & 0.995 & 1819 & 4.000 & 0.000 & 0 & 4.000 & 8 & 0.172 & 0.000 & 0 & 8 & 0.172 \\
			typeset.5797.33 & 35 & 0.748 & 93 & 0.234 & 0.000 & 0 & 0.234 & 9 & 0.032 & 0.000 & 0 & 9 & 0.032 \\
			typeset.5881.246 & 248 & 0.986 & 979 & 191.813 & 0.306 & 191 & 356.218 & 52 & 0.343 & 0.000 & 0 & 52 & 0.343 \\
			Average &  &  &  & 55.134 & 2.693 & 849 & 125.095 & 49 & 0.225 & 2.276 & 233 & 769 & 33.965 \\
			\hline
		\end{tabular}
	}
\end{table} 
\end{landscape}

\subsubsection{Computational Results for LR Models}

Tables \ref{table-pcmrt-LR-tsplib}-\ref{table-pcmrt-LR-compilers} show the overall results for the linear relaxation of the MILP models proposed for the PCMCA-WT. In all the tables the \textit{Cost} column reports the value of the objective function. {The \textit{Gap} column indicates the percentage relative difference between the cost of the best known integer solution of the instance ($Cost_{Best}$), and the objective function cost of the model's linear relaxation ($Cost_{LR}$), computed as $100 \cdot \frac{Cost_{Best} - Cost_{LR}}{Cost_{Best}}$.} The \textit{Cuts} column indicates the number of inequalities that are dynamically added to the model, that is {inequalities} (\ref{eqn-18}) and (\ref{eqn-35}) for each model. The solution information are not reported for instances where the solver times out or runs out of memory before finding the optimal solution.

The linear relaxation of the \textit{MCF} model has an average optimality gap of {20.22\%}, and the solver times out before finding the optimal solution for the model's linear relaxation for instances that are {larger} than 240. Comparing the results for the \textit{DA} and \textit{AAC} models, the first model's linear relaxation has an average optimality gap of {23.96\%}, whereas the second model has an average optimaility gap of {23.99\%} across all the instances. Comparing the number of generated cuts, the \textit{AAC} model generates 6\% less cuts compared to the \textit{DA} model. We can notice that the \textit{DA} model finds higher estimates for the optimal integer solution compared to the other two models, however {the} \textit{AAC} model finds better estimates on the symmetrical COMPILERS instances which have symmetric costs. Instances where the \textit{AAC} model {and \textit{MCF} model} found tighter estimates are underlined in the tables.

A major problem that we can notice in the \textit{MCF} model is that the solution times are much larger when compared to the other two models. For example, the \textit{MCF} model finds the optimal solution of ESC78 instance within 9 minutes compared to 4 and 6 seconds of computing time by the other two models. The same increased solution time can be noticed in other instances, sometimes reaching almost an hour to solve the linear relaxation compared to few seconds. For the instances that are optimally solved by all three LR models, the solution time is on average {889} seconds for the \textit{MCF} model, 19 seconds for the \textit{DA} model, and 38 seconds for the \textit{AAC} model. 

In general, it is hard to decide which {linear relaxation} would perform better on some instances, however the \textit{DA} model seems to be the most suitable, as its linear relaxation is much easier to solve compared to the other two, and its result exhibits a lower average optimality gap compared to the other two models.

\begin{landscape}
\begin{table}[t]
	\caption{LR Models computational results for PCMCA-WT for TSPLIB instances}
	\label{table-pcmrt-LR-tsplib}
	\resizebox{1\linewidth}{0.45\textwidth}{
		\begin{tabular}{lrrrrrrrrrrrrr}
			\hline
			\multicolumn{3}{c}{Instance} & 
			\multicolumn{3}{c}{MCF} & \multicolumn{4}{c}{DA}        & \multicolumn{4}{c}{AAC} \\
			\cmidrule(r){1-3} \cmidrule(r){4-6} \cmidrule(r){7-10} \cmidrule(r){11-14} 
			Name   & Size  & Density of P  &  Cost & Time (s) & Gap & Cost & Cuts & Time (s) & Gap & Cost & Cuts & Time (s) & Gap \\ 
			\hline
			br17.10 & 18 & 0.314 & 25.08 & 1.437 & 42.996 & 25.17 & 15 & 0.265 & 42.795 & 25.15 & 18 & 0.203 & 42.841 \\
			br17.12 & 18 & 0.359 & 25.12 & 1.032 & 42.917 & 25.17 & 15 & 0.265 & 42.795 & 25.15 & 18 & 0.203 & 42.841 \\
			ESC07 & 9 & 0.611 & 1887.50 & 0.204 & 0.971 & 1890.75 & 3 & 0.110 & 0.800 & 1782.07 & 7 & 0.031 & 6.502 \\
			ESC11 & 13 & 0.359 & \underline{2127.00} & 0.297 & 2.162 & 2067.00 & 10 & 0.187 & 4.922 & 2040.30 & 8 & 0.312 & 6.150 \\
			ESC12 & 14 & 0.396 & 1138.00 & 0.109 & 0.000 & 1138.00 & 0 & 0.063 & 0.000 & 1138.00 & 1 & 0.078 & 0.000 \\
			ESC25 & 27 & 0.177 & 1043.05 & 3.297 & 9.927 & 1082.41 & 37 & 0.672 & 6.528 & 1064.20 & 40 & 0.890 & 8.100 \\
			ESC47 & 49 & 0.108 & 703.14 & 36.969 & 5.872 & 703.12 & 257 & 9.250 & 5.874 & 703.14 & 80 & 3.625 & 5.871 \\
			ESC63 & 65 & 0.173 & 56.00 & 266.610 & 0.000 & 56.00 & 6 & 1.594 & 0.000 & 56.00 & 67 & 20.937 & 0.000 \\
			ESC78 & 80 & 0.139 & 502.16 & 523.810 & 58.014 & 721.93 & 8 & 4.453 & 39.638 & 718.00 & 6 & 5.969 & 39.967 \\
			ft53.1 & 54 & 0.082 & 3953.05 & 188.391 & 3.325 & 3962.45 & 34 & 5.594 & 3.095 & 3949.66 & 25 & 8.297 & 3.408 \\
			ft53.2 & 54 & 0.094 & 3997.50 & 180.250 & 6.688 & 3998.74 & 40 & 5.531 & 6.659 & 3993.84 & 52 & 8.547 & 6.773 \\
			ft53.3 & 54 & 0.225 & 4286.90 & 171.203 & 21.442 & 4388.35 & 69 & 7.640 & 19.583 & 4249.72 & 97 & 13.562 & 22.124 \\
			ft53.4 & 54 & 0.604 & 5026.27 & 52.062 & 21.940 & 5149.40 & 18 & 4.875 & 20.028 & 5010.26 & 21 & 5.250 & 22.189 \\
			ft70.1 & 71 & 0.036 & 32801.04 & 1021.590 & 1.492 & 32980.40 & 148 & 16.610 & 0.954 & 32851.51 & 130 & 39.453 & 1.341 \\
			ft70.2 & 71 & 0.075 & 32895.06 & 1523.523 & 4.514 & 33016.60 & 160 & 22.235 & 4.161 & 32939.71 & 171 & 48.172 & 4.384 \\
			ft70.3 & 71 & 0.142 & 33441.93 & 2048.220 & 21.740 & 33641.84 & 402 & 47.500 & 21.272 & \underline{33672.54} & 264 & 63.344 & 21.201 \\
			ft70.4 & 71 & 0.589 & 35433.67 & 113.969 & 12.302 & 35805.55 & 132 & 18.188 & 11.381 & 35427.98 & 156 & 31.813 & 12.316 \\
			rbg048a & 50 & 0.444 & \underline{231.57} & 335.875 & 11.277 & 228.06 & 11 & 1.703 & 12.621 & 221.84 & 11 & 3.985 & 15.004 \\
			rbg050c & 52 & 0.459 & 215.12 & 124.781 & 4.393 & 214.35 & 36 & 2.485 & 4.733 & \underline{217.24} & 26 & 3.422 & 3.449 \\
			rbg109 & 111 & 0.909 & 293.13 & 590.328 & 29.196 & 314.83 & 19 & 8.609 & 23.954 & 314.79 & 5 & 13.531 & 23.964 \\
			rbg150a & 152 & 0.927 & 373.34 & 1417.090 & 30.991 & 417.14 & 12 & 10.969 & 22.895 & 416.17 & 7 & 32.969 & 23.074 \\
			rbg174a & 176 & 0.929 & 365.40 & 2096.480 & 37.000 & 405.03 & 10 & 21.984 & 30.167 & 401.07 & 9 & 65.828 & 30.850 \\
			rbg253a & 255 & 0.948 & - & - & - & 458.28 & 11 & 60.750 & 40.714 & 467.20 & 7 & 248.812 & 39.560 \\
			rbg323a & 325 & 0.928 & - & - & - & 920.95 & 23 & 210.250 & 77.176 & 892.63 & 19 & 719.109 & 77.878 \\
			rbg341a & 343 & 0.937 & - & - & - & 677.73 & 52 & 365.250 & 82.165 & 672.90 & 44 & 725.343 & 82.292 \\
			rbg358a & 360 & 0.886 & - & - & - & 699.25 & 77 & 429.547 & 78.785 & 666.92 & 29 & 1395.735 & 79.766 \\
			rbg378a & 380 & 0.894 & - & - & - & 644.63 & 107 & 422.203 & 76.635 & 605.73 & 61 & 1787.078 & 78.045 \\
			kro124p.1 & 101 & 0.046 & 32597.08 & 3482.940 & 7.476 & 32657.90 & 106 & 47.765 & 7.304 & 32603.69 & 123 & 89.266 & 7.457 \\
			kro124p.2 & 101 & 0.053 & 32761.06 & 3482.630 & 13.687 & 33053.63 & 135 & 48.688 & 12.916 & 32922.44 & 171 & 134.109 & 13.262 \\
			kro124p.3 & 101 & 0.092 & 33715.31 & 3490.750 & 37.550 & 33951.74 & 270 & 76.703 & 37.112 & 33826.66 & 303 & 212.000 & 37.344 \\
			kro124p.4 & 101 & 0.496 & 37386.23 & 2552.250 & 32.255 & 38025.91 & 132 & 35.250 & 31.096 & 37233.59 & 174 & 88.859 & 32.532 \\
			p43.1 & 44 & 0.101 & 2825.00 & 864.844 & 36.801 & 2825.00 & 49 & 2.140 & 36.801 & 2797.37 & 53 & 3.797 & 37.419 \\
			p43.2 & 44 & 0.126 & 2759.38 & 1036.547 & 35.453 & 2825.00 & 98 & 2.672 & 33.918 & 2722.91 & 140 & 9.422 & 36.306 \\
			p43.3 & 44 & 0.191 & 2759.53 & 573.968 & 48.660 & 2845.00 & 113 & 3.469 & 47.070 & 2722.79 & 197 & 10.407 & 49.343 \\
			p43.4 & 44 & 0.164 & 2925.07 & 11.937 & 40.305 & 2930.08 & 115 & 2.984 & 40.202 & 2822.27 & 93 & 4.968 & 42.403 \\
			prob.100 & 100 & 0.048 & 643.00 & 3484.390 & 36.210 & 668.13 & 1225 & 598.594 & 33.717 & 657.65 & 1009 & 999.453 & 34.757 \\
			prob.42 & 42 & 0.116 & 148.90 & 57.672 & 12.927 & 153.18 & 107 & 4.813 & 10.421 & 148.26 & 52 & 5.469 & 13.298 \\
			ry48p.1 & 49 & 0.091 & \underline{13134.08} & 99.141 & 4.285 & 13133.93 & 62 & 4.953 & 4.286 & 13115.36 & 54 & 8.391 & 4.421 \\
			ry48p.2 & 49 & 0.103 & 13195.09 & 95.937 & 9.986 & 13243.77 & 48 & 4.703 & 9.654 & 13206.48 & 34 & 5.203 & 9.909 \\
			ry48p.3 & 49 & 0.193 & 13926.14 & 136.859 & 14.700 & 13979.71 & 207 & 12.469 & 14.371 & 13925.41 & 191 & 19.016 & 14.704 \\
			ry48p.4 & 49 & 0.588 & 16168.48 & 16.781 & 17.713 & 16316.13 & 60 & 5.344 & 16.962 & 16186.84 & 93 & 9.406 & 17.620 \\
			Average &  &  &  & 835.671 & 19.921 &  & 108 & 61.691 & 24.784 &  & 99 & 166.982 & 25.626 \\
			\hline
		\end{tabular}
	}
\end{table} 

\begin{table}[t]
	\caption{LR Models computational results for PCMCA-WT for COMPILERS instances}
	\label{table-pcmrt-LR-compilers}
	\resizebox{1\linewidth}{0.45\textwidth}{
		\begin{tabular}{lrrrrrrrrrrrrr}
			\hline
			\multicolumn{3}{c}{Instance} & 
			\multicolumn{3}{c}{MCF} & \multicolumn{4}{c}{DA}        & \multicolumn{4}{c}{AAC} \\ 
			\cmidrule(r){1-3} \cmidrule(r){4-6} \cmidrule(r){7-10} \cmidrule(r){11-14} 
			Name   & Size  & Density of P  &  Cost & Time (s) & Gap & Cost & Cuts & Time (s) & Gap & Cost & Cuts & Time (s) & Gap \\ 
			\hline
			gsm.153.124 & 126 & 0.97 & 221.14 & 135.500 & 29.348 & 222.23 & 15 & 0.610 & 29.000 & \underline{223.41} & 15 & 3.312 & 28.623 \\
			gsm.444.350 & 353 & 0.99 & - & - & - & 1914.83 & 6 & 4.531 & 33.351 & 2042.75 & 4 & 5.156 & 28.898 \\
			gsm.462.77 & 79 & 0.84 & 377.54 & 231.625 & 22.636 & 384.41 & 27 & 6.016 & 21.227 & 380.96 & 29 & 5.375 & 21.934 \\
			jpeg.1483.25 & 27 & 0.484 & \underline{84.00} & 1.844 & 3.446 & 78.97 & 17 & 0.406 & 9.230 & 76.89 & 16 & 0.719 & 11.621 \\
			jpeg.3184.107 & 109 & 0.887 & 419.22 & 254.187 & 38.710 & 441.65 & 60 & 2.875 & 35.431 & \underline{451.07} & 76 & 16.047 & 34.054 \\
			jpeg.3195.85 & 87 & 0.74 & \underline{13.04} & 3595.590 & 47.837 & 9.00 & 126 & 7.875 & 64.000 & 13.00 & 195 & 9.130 & 48.000 \\
			jpeg.3198.93 & 95 & 0.752 & 140.26 & 3594.700 & 31.244 & 151.87 & 214 & 9.296 & 25.554 & \underline{152.79} & 152 & 11.730 & 25.103 \\
			jpeg.3203.135 & 137 & 0.897 & 524.22 & 1217.125 & 30.104 & 564.03 & 58 & 3.234 & 24.796 & \underline{568.97} & 122 & 21.063 & 24.137 \\
			jpeg.3740.15 & 17 & 0.257 & 33.00 & 0.313 & 0.000 & 33.00 & 5 & 0.093 & 0.000 & 33.00 & 3 & 0.125 & 0.000 \\
			jpeg.4154.36 & 38 & 0.633 & \underline{86.88} & 7.843 & 3.469 & 85.06 & 26 & 2.125 & 5.489 & 84.01 & 21 & 0.765 & 6.656 \\
			jpeg.4753.54 & 56 & 0.769 & 150.20 & 76.875 & 8.413 & 153.08 & 30 & 3.500 & 6.659 & 150.19 & 40 & 2.250 & 8.421 \\
			susan.248.197 & 199 & 0.939 & 613.41 & 3519.028 & 48.192 & 658.84 & 108 & 9.672 & 44.355 & \underline{682.70} & 138 & 34.656 & 42.340 \\
			susan.260.158 & 160 & 0.916 & 494.65 & 1681.391 & 43.533 & 519.01 & 116 & 7.796 & 40.752 & \underline{534.43} & 244 & 55.359 & 38.992 \\
			susan.343.182 & 184 & 0.936 & 469.79 & 1488.110 & 45.500 & 539.47 & 72 & 6.156 & 37.416 & \underline{554.04} & 94 & 20.234 & 35.726 \\
			typeset.10192.123 & 125 & 0.744 & 246.52 & 3579.440 & 40.599 & 264.30 & 131 & 22.078 & 36.313 & 260.60 & 90 & 23.328 & 37.205 \\
			typeset.10835.26 & 28 & 0.349 & \underline{93.55} & 2.187 & 16.470 & 81.83 & 7 & 0.328 & 26.938 & 92.34 & 9 & 0.625 & 17.554 \\
			typeset.12395.43 & 45 & 0.518 & \underline{139.02} & 57.437 & 4.784 & 137.85 & 110 & 3.094 & 5.582 & 137.27 & 107 & 4.484 & 5.979 \\
			typeset.15087.23 & 25 & 0.557 & 92.27 & 2.046 & 4.878 & 93.00 & 13 & 0.157 & 4.124 & 93.00 & 30 & 0.516 & 4.124 \\
			typeset.15577.36 & 38 & 0.555 & 120.01 & 4.141 & 3.995 & 120.69 & 21 & 0.531 & 3.448 & 120.01 & 14 & 1.015 & 3.992 \\
			typeset.16000.68 & 70 & 0.658 & 70.00 & 2051.330 & 12.500 & 70.07 & 121 & 4.062 & 12.413 & 69.48 & 486 & 32.735 & 13.150 \\
			typeset.1723.25 & 27 & 0.245 & \underline{56.00} & 5.516 & 6.667 & 55.33 & 93 & 1.062 & 7.783 & 55.50 & 90 & 2.047 & 7.500 \\
			typeset.19972.246 & 248 & 0.993 & - & - & - & 1229.52 & 4 & 1.891 & 36.261 & 1234.43 & 11 & 4.328 & 36.007 \\
			typeset.4391.240 & 242 & 0.981 & - & - & - & 1006.12 & 31 & 2.812 & 28.745 & 1057.66 & 64 & 10.281 & 25.095 \\
			typeset.4597.45 & 47 & 0.493 & \underline{144.01} & 23.484 & 7.094 & 143.13 & 89 & 3.282 & 7.658 & 141.18 & 169 & 9.281 & 8.916 \\
			typeset.4724.433 & 435 & 0.995 & - & - & - & 2351.03 & 29 & 12.016 & 31.517 & 2351.76 & 33 & 21.531 & 31.495 \\
			typeset.5797.33 & 35 & 0.748 & 105.93 & 2.843 & 6.258 & 106.00 & 9 & 0.625 & 6.195 & 104.21 & 21 & 0.891 & 7.779 \\
			typeset.5881.246 & 248 & 0.986 & - & - & - & 1204.29 & 27 & 5.234 & 29.159 & 1229.51 & 28 & 9.422 & 27.676 \\
			Average &  &  &  & 978.753 & 20.713 &  & 58 & 4.495 & 22.718 &  & 85 & 11.348 & 21.518 \\
			\hline
		\end{tabular}
	}
\end{table} 
\end{landscape}

\subsubsection{Computational Results for IP Models}

Tables \ref{table-pcmrt-IP-tsplib}-\ref{table-pcmrt-IP-compilers} show the overall results of the MILP models proposed for the PCMCA-WT. {In Tables \ref{table-pcmrt-IP-tsplib}-\ref{table-pcmrt-IP-compilers}, the \textit{Cost} column reports the lower bound ($LB$) and upper bound ($UB$) on the value of the objective function obtained from solving the respective model. The \textit{IP Gap} column measures the percentage relative difference between the upper and lower bound obtained from solving the respective model, calculated as $100 \cdot \frac{UB - LB}{UB}$.} \textit{Cuts} column indicates the number {of} inequalities that are dynamically added to the model, that is inequalities (\ref{eqn-18}) and (\ref{eqn-35}) for each model. The solution time is not reported in the tables for the instances that are not optimally solved within the time limit. Moreover, the solution information is not reported for instances where it was not possible to solve the model's linear relaxation (see Tables \ref{table-pcmrt-LR-tsplib}-\ref{table-pcmrt-LR-compilers}).

The \textit{MCF} model achieves an optimality gap of {29\%} on average across all the instances it is able to solve, compared to 14\% for the \textit{DA} model, and 15\% for the \textit{AAC} model. We can notice that the solver is finding it difficult to solve the linear relaxation of the MCF model by observing the small number of nodes generated in the time limit for some of the instances, compared to the other two models. However, the \textit{MCF} model is able to provide the best known solution (underlined in the tables) for {1} out of 68 instances, and the best lower bound for 1 instance.

Comparing the results of the \textit{DA} and \textit{AAC} models, we can see that the \textit{DA} model has an average optimality gap of 19\% compared to 21\% across all the instances. Furthermore, the \textit{DA} model solves {three} extra instances for the TSPLIB instances, while the \textit{AAC} model solves one extra instance for the COMPILERS instances. For the instances {(marked bold in the tables)} that are optimally solved by both models, the \textit{DA} model is {119}\% faster at solving those instances on average, {computed as $100 \cdot \frac{T_{DA}-T_{AAC}}{T_{DA}}$, where $T_{DA}$ and $T_{AAC}$ is the average solution time for solving those instances using the $DA$ model and $AAC$ model.} Comparing the number of branch-decision-tree nodes generated by the two models, the \textit{DA} model generates 36\% more nodes on average across all the instances, and 10\% more nodes on average for the instances that are optimally solved by the two models, {The \textit{DA} model found the best lower bound for {55} out of 68 instances, and better upper bounds for {51} out of 68 instances. On the other hand,} the \textit{AAC} model found tighter lower bounds for {12} out of 68 instances, and better upper bounds for {16} of those instances (underlined in the tables). {In general, the \textit{DA} model performs better than the \textit{AAC} model, except on symmetrical instances and/or instances with extreme high densities larger than 0.9.}

We can conclude that the \textit{AAC} model is more suitable for symmetrical instances with extreme densities, while the \textit{DA} model is more suitable for general instances. The \textit{MCF} finds better bounds compared to the other two models for some instances, however it is not suitable for instances with size larger than 200.  

\begin{landscape}
	\begin{table}[t]
		\caption{IP Models computational results for PCMCA-WT for TSPLIB instances}
		\label{table-pcmrt-IP-tsplib}
		\resizebox{\linewidth}{!}{
			\begin{tabular}{lrrrrrrrrrrrrrrrr}
				\hline
				\multicolumn{3}{c}{Instance} & \multicolumn{4}{c}{MCF} & \multicolumn{5}{c}{DA}    & \multicolumn{5}{c}{AAC}  \\
				\cmidrule(r){1-3} \cmidrule(r){4-7} \cmidrule(r){8-12} \cmidrule(r){13-17} 
				Name   & Size  & Density of P  & Cost & IP Gap & Nodes & Time (s) & Cost & IP Gap & Nodes & Cuts & Time (s) & Cost & IP Gap & Nodes & Cuts & Time (s)  \\
				\hline
				br17.10  & 18 & 0.314 & [\underline{35}, 44] & 20.455 & 192505 &  &  [34, 44]  & 22.727 & 2767833 & 3799 &    &  [34, 44]  & 22.727 & 2074173 & 3893 &  \\
				br17.12  & 18 & 0.359 & [35, 44] & 20.455 & 291653 &  &  [35, 45]  & 22.222 & 3376744 & 2519 &    &  [35, 44]  & 20.455 & 2845672 & 2265 &  \\  
				\textbf{ESC07}  & 9 & 0.611 & 1906 & 0.000 & 7 & 0.109 & 1906 & 0.000 & 17 & 5 & 0.078 & 1906 & 0.000 & 0 & 3 & 0.078 \\
				\textbf{ESC11}  & 13 & 0.359 & 2174 & 0.000 & 20 & 0.265 & 2174 & 0.000 & 22 & 13 & 0.172 & 2174 & 0.000 & 33 & 12 & 0.219 \\
				\textbf{ESC12}  & 14 & 0.396 & 1138 & 0.000 & 0 & 0.078 & 1138 & 0.000 & 0 & 0 & 0.063 & 1138 & 0.000 & 0 & 1 & 0.078 \\
				\textbf{ESC25}  & 27 & 0.177 & 1158 & 0.000 & 1158 & 26.625 & 1158 & 0.000 & 1450 & 698 & 3.922 & 1158 & 0.000 & 1098 & 452 & 4.812 \\
				ESC47  & 49 & 0.108 & 747 & 0.000 & 2776 & 3154.14 & 747 & 0.000 & 750 & 31743 & 2230.171 &  [704, 783]  & 10.089 & 2 & 37495 &     \\
				\textbf{ESC63}  & 65 & 0.173 & [56, 61] & 8.197 & 165 &  & 56 & 0.000 & 4100 & 287 & 53.281 & 56 & 0.000 & 3177 & 14118 & 1223.313 \\
				\textbf{ESC78}  & 80 & 0.139 & [922, 1346] & 31.501 & 920 &  & 1196 & 0.000 & 11177 & 267 & 159.093 & 1196 & 0.000 & 15734 & 943 & 301.625 \\
				\textbf{ft53.1}  & 54 & 0.082 & [3984, 4089] & 2.568 & 790 &  & 4089 & 0.000 & 134416 & 2932 & 1430.797 & 4089 & 0.000 & 56905 & 1433 & 1195.875 \\
				ft53.2  & 54 & 0.094 & [4054, 4659] & 12.986 & 1055 &  &  [4135, 4284]  & 3.478 & 200163 & 6053 &    &  [4112, 4318]  & 4.771 & 57840 & 8393 &     \\
				ft53.3  & 54 & 0.225 & [4472, 6071] & 26.338 & 1030 &  &  [4623, 5457]  & 15.283 & 168357 & 6749 &    &  [4545, 5734]  & 20.736 & 92513 & 5778 &     \\
				ft53.4  & 54 & 0.604 & [5409, 6871] & 21.278 & 6531 &  &  [5657, 6439]  & 12.145 & 564462 & 2361 &    &  [5559, 6668]  & 16.632 & 340573 & 1588 &     \\
				ft70.1  & 71 & 0.036 & [32937, 37078] & 11.168 & 807 &  &  [33117, 33298]  & 0.544 & 23240 & 11362 &    &  [\underline{33128}, 33714]  & 1.738 & 27834 & 6322 &     \\
				ft70.2  & 71 & 0.075 & [33084, 43282] & 23.562 & 408 &  &  [33357, 34450]  & 3.173 & 11000 & 13238 &    &  [33297, 34713]  & 4.079 & 11474 & 10171 &     \\
				ft70.3  & 71 & 0.142 & [33693,  66153] & 49.068 & 287 &  &  [33914, 42732]  & 20.636 & 8574 & 15867 &    &  [33830, 49116]  & 31.122 & 9040 & 11611 &     \\
				ft70.4  & 71 & 0.589 & [35912, 44507] & 19.312 & 1969 &  &  [36517, 40404]  & 9.620 & 121796 & 3615 &    &  [36188, 40701]  & 11.088 & 78814 & 3008 &     \\
				rbg048a  & 50 & 0.444 & [259, 267] & 2.996 & 596 &  & 261 & 0.000 & 146017 & 15025 & 2794.06 &  [260, 266]  & 2.256 & 62610 & 14794 &     \\
				\textbf{rbg050c}  & 52 & 0.459 & [225, 236] & 4.661 & 847 &  & 225 & 0.000 & 1260 & 326 & 10.469 & 225 & 0.000 & 24050 & 432 & 210.562 \\
				rbg109  & 111 & 0.909 & [320, 699] & 54.220 & 529 &  &  [354, 414]  & 14.493 & 150506 & 219 &    &  [349, 428]  & 18.458 & 85685 & 601 &     \\
				rbg150a  & 152 & 0.927 & [403, 972] & 58.539 & 384 &  &  [447, 541]  & 17.375 & 59431 & 232 &    &  [441, 650]  & 32.154 & 34702 & 618 &     \\
				rbg174a  & 176 & 0.929 & [400, 1121] & 64.318 & 326 &  &  [446, 580]  & 23.103 & 42286 & 302 &    &  [441, 627]  & 29.665 & 24226 & 113 &     \\
				rbg253a  & 255 & 0.948 & - & - & - &  &  [477, 773]  & 38.292 & 12700 & 152 &    &  [475, 971]  & 51.081 & 5427 & 231 &     \\
				rbg323a  & 325 & 0.928 & - & - & - &  &  [926, 4367]  & 78.796 & 4536 & 86 &    &  [903, \underline{4035}]  & 77.621 & 2845 & 78 &     \\
				rbg341a  & 343 & 0.937 & - & - & - &  &  [681, 3832]  & 82.229 & 3963 & 116 &    &  [680, \underline{3800}]  & 82.105 & 2300 & 90 &     \\
				rbg358a  & 360 & 0.886 & - & - & - &  &  [706, 3296]  & 78.580 & 3408 & 698 &    &  [674, 4109]  & 83.597 & 1779 & 144 &     \\
				rbg378a  & 380 & 0.894 & - & - & - &  &  [649, 2759]  & 76.477 & 2831 & 110 &    &  [613, 6701]  & 90.852 & 1128 & 106 &     \\
				kro124p.1  & 101 & 0.046 & [32598, 45652] & 28.595 & 1 &  &  [32858, 35438]  & 7.280 & 3389 & 139 &    &  [32827, \underline{35231}]  & 6.824 & 6926 & 5724 &     \\
				kro124p.2  & 101 & 0.053 & [32762, 55265] & 40.718 & 1 &  &  [33190, 37956]  & 12.557 & 14237 & 5920 &    &  [33163, 38002]  & 12.734 & 8410 & 3458 &    \\ 
				kro124p.3  & 101 & 0.092 & [33716, 329579] & 89.770 & 1 &  &  [34217, 53988]  & 36.621 & 20016 & 6098 &    &  [33991, 77266]  & 56.008 & 3313 & 9747 &     \\
				kro124p.4  & 101 & 0.496 & [37395, 103491] & 63.866 & 2 &  &  [39413, 55187]  & 28.583 & 71246 & 2652 &    &  [38269, 62162]  & 38.437 & 32120 & 2746 &     \\
				p43.1  & 44 & 0.101 & [2825, \underline{4470}] & 36.801 & 779 &  &  [2827, 4585]  & 38.342 & 160185 & 19944 &    &  [2825, 4615]  & 38.787 & 51091 & 11280 &     \\
				p43.2  & 44 & 0.126 & [2760, 4845] & 43.034 & 430 &  &  [2826, 4275]  & 33.895 & 162580 & 18401 &    &  [2826, 5025]  & 43.761 & 55765 & 15014 &     \\
				p43.3  & 44 & 0.191 & [2828, 6955] & 59.339 & 511 &  &  [2864, 6105]  & 53.088 & 85658 & 16311 &    &  [2847, \underline{5375}]  & 47.033 & 26582 & 10567 &     \\
				p43.4  & 44 & 0.164 & [2991, 5415] & 44.765 & 5826 &  &  [3101, 4900]  & 36.714 & 875030 & 2702 &    &  [2981, 4970]  & 40.020 & 408540 & 1620 &     \\
				prob.100  & 100 & 0.048 & [643, 3611] & 82.193 & 1 &  &  [674, 1047]  & 35.626 & 3718 & 9961 &    &  [661, \underline{1008}]  & 34.425 & 1533 & 7323 &     \\
				prob.42  & 42 & 0.116 & [163, 171] & 4.678 & 2746 &  & 171 & 0.000 & 112468 & 7561 & 945.203 &  [167, 171]  & 2.339 & 157734 & 11472 &     \\
				ry48p.1  & 49 & 0.091 & [13278, 15362] & 13.566 & 1339 &  &  [13356, 13854]  & 3.595 & 158744 & 8620 &    &  [\underline{13371}, \underline{13722}]  & 2.558 & 85005 & 6664 &     \\
				ry48p.2  & 49 & 0.103 & [13403, 17833] & 24.842 & 585 &  &  [13507, 16968]  & 20.397 & 82990 & 10129 &    &  [\underline{13508}, \underline{14659}]  & 7.852 & 63149 & 6161 &     \\
				ry48p.3  & 49 & 0.193 & [13984, 21379] & 34.590 & 773 &  &  [14371, 16337]  & 12.034 & 70000 & 8508 &    &  [14299, \underline{16326}]  & 12.416 & 38332 & 6103 &     \\
				ry48p.4  & 49 & 0.588 & [16922, 20030] & 15.517 & 10691 &  &  [17339, 19649]  & 11.756 & 699363 & 921 &    &  [17136, 20915]  & 18.068 & 398622 & 992 &     \\
				Average  &    &    &  & 28.164 & 14679 &  &    & 20.723 & 252211 & 5772 & 693.392 &    & 23.719 & 175531 & 5453 & 367.070 \\
				\hline
			\end{tabular}
		}
	\end{table} 
	
	\begin{table}[t]
		\caption{IP Models computational results for PCMCA-WT for COMPILERS instances}
		\label{table-pcmrt-IP-compilers}
		\resizebox{\linewidth}{!}{
			\begin{tabular}{lrrrrrrrrrrrrrrrr}
				\hline
				\multicolumn{3}{c}{Instance} & \multicolumn{4}{c}{MCF} & \multicolumn{5}{c}{DA}    & \multicolumn{5}{c}{AAC}  \\
				\cmidrule(r){1-3} \cmidrule(r){4-7} \cmidrule(r){8-12} \cmidrule(r){13-17}
				Name   & Size  & Density of P  & Cost & IP Gap & Nodes & Time (s) & Cost & IP Gap & Nodes & Cuts & Time (s) & Cost & IP Gap & Nodes & Cuts & Time (s)  \\
				\hline
				gsm.153.124  & 126 & 0.97 & [234, 331] & 29.305 & 936 &  &  [246, 313]  & 21.406 & 357665 & 413 &    &  [243, 319]  & 23.824 & 270962 & 405 &  \\
				gsm.444.350  & 353 & 0.99 & - & - & - &  &  [1996, 2873]  & 30.526 & 50228 & 209 &    &  [\underline{2103}, 2905]  & 27.608 & 27606 & 216 &     \\
				gsm.462.77  & 79 & 0.84 & [392, 707] & 44.554 & 1150 &  &  [396, 493]  & 19.675 & 462000 & 1734 &    &  [391, \underline{488}]  & 19.877 & 185939 & 2399 &     \\
				\textbf{jpeg.1483.25}  & 27 & 0.484 & 87 & 0.000 & 2338 & 71.61 & 87 & 0.000 & 20708 & 583 & 11.254 & 87 & 0.000 & 33035 & 553 & 72.907 \\
				jpeg.3184.107  & 109 & 0.887 & [430, 811] & 46.979 & 992 &  &  [488, 684]  & 28.655 & 315065 & 1051 &    &  [\underline{489}, 684]  & 28.509 & 121292 & 744 &     \\
				jpeg.3195.85  & 87 & 0.74 & [14, 76] & 81.579 & 0 &  &  [22, 25]  & 12.000 & 27085 & 9849 &    &  [21, 30]  & 30.000 & 2955 & 3273 &     \\
				jpeg.3198.93  & 95 & 0.752 & [141, 353] & 60.057 & 0 &  &  [172, 213]  & 19.249 & 89431 & 2584 &    &  [161, \underline{204}]  & 21.078 & 16822 & 3448 &     \\
				jpeg.3203.135  & 137 & 0.897 & [535, 1539] & 65.237 & 14 &  &  [600, 755]  & 20.530 & 179785 & 2533 &    &  [595, \underline{750}]  & 20.667 & 120417 & 1332 &  \\   
				\textbf{jpeg.3740.15}  & 17 & 0.257 & 33 & 0.000 & 0 & 0.266 & 33 & 0.000 & 0 & 5 & 0.093 & 33 & 0.000 & 0 & 3 & 0.125 \\
				\textbf{jpeg.4154.36}  & 38 & 0.633 & 90 & 0.000 & 10271 & 139.579 & 90 & 0.000 & 15705 & 461 & 25.766 & 90 & 0.000 & 4242 & 298 & 12.562 \\
				jpeg.4753.54  & 56 & 0.769 & [157, 174] & 9.770 & 1826 &  &  [163, 165]  & 1.212 & 1128020 & 1079 &    & \underline{164} & 0.000 & 594353 & 911 & 2231.235 \\
				susan.248.197  & 199 & 0.939 & [614, 3014] & 79.628 & 0 &  &  [718, 1184]  & 39.358 & 62265 & 1519 &    &  [\underline{736}, 1353]  & 45.602 & 39582 & 1511 & \\    
				susan.260.158  & 160 & 0.916 & [498, 2530] & 80.316 & 74 &  &  [541, 1149]  & 52.916 & 163071 & 3999 &    &  [\underline{564}, 876]  & 35.616 & 49000 & 1510 &   \\  
				susan.343.182  & 184 & 0.936 & [527, 1433] & 63.224 & 5 &  &  [586, 862]  & 32.019 & 111339 & 901 &    &  [\underline{591}, 887]  & 33.371 & 55306 & 1143 &     \\
				typeset.10192.123  & 125 & 0.744 & [247, 774] & 68.088 & 0 &  &  [280, 415]  & 32.530 & 92153 & 3643 &    &  [280, 456]  & 38.596 & 59000 & 1924 &     \\
				typeset.10835.26  & 28 & 0.349 & [99, 114] & 13.158 & 119310 &  &  [99, 112]  & 11.607 & 986681 & 7100 &    &  [99, 113]  & 12.389 & 577573 & 5961 &     \\
				typeset.12395.43  & 45 & 0.518 & [141, 148] & 4.730 & 5556 &  &  [143, 146]  & 2.055 & 392093 & 4782 &    &  [141, 147]  & 4.082 & 175106 & 4776 &     \\
				\textbf{typeset.15087.23}  & 25 & 0.557 & 97 & 0.000 & 10917 &  & 97 & 0.000 & 24721 & 1082 & 29.235 & 97 & 0.000 & 13225 & 759 & 32.094 \\
				\textbf{typeset.15577.36}  & 38 & 0.555 & 125 & 0.000 & 12472 & 1738.422 & 125 & 0.000 & 18600 & 1467 & 51.688 & 125 & 0.000 & 106552 & 3042 & 834.797 \\
				typeset.16000.68  & 70 & 0.658 & [71, 102] & 30.392 & 2 &  &  [77, 86]  & 10.465 & 13917 & 28566 &    &  [77, 80]  & 3.750 & 29319 & 6506 &     \\
				\textbf{typeset.1723.25}  & 27 & 0.245 & 60 & 0.000 & 682 & 84.75 & 60 & 0.000 & 1212 & 281 & 3.391 & 60 & 0.000 & 5533 & 481 & 29.734 \\
				typeset.19972.246  & 248 & 0.993 & - & - & - &  &  [1325, 1963]  & 32.501 & 68601 & 48 &    &  [1307, \underline{1929}]  & 32.245 & 33530 & 45 &     \\
				typeset.4391.240  & 242 & 0.981 & - & - & - &  &  [1067, 1419]  & 24.806 & 100406 & 332 &    &  [\underline{1093}, \underline{1412}]  & 22.592 & 43577 & 388 &     \\
				typeset.4597.45  & 47 & 0.493 & [147, 167] & 11.976 & 3247 &  &  [150, 155]  & 3.226 & 501643 & 4681 &    &  [149, 159]  & 6.289 & 107981 & 3505 &     \\
				typeset.4724.433  & 435 & 0.995 & - & - & - &  &  [2378, 5376]  & 55.766 & 29538 & 214 &    &  [\underline{2460}, \underline{3433}]  & 28.343 & 15907 & 211 &     \\
				\textbf{typeset.5797.33}  & 35 & 0.748 & 113 & 0.000 & 2872 & 100.234 & 113 & 0.000 & 15172 & 652 & 24.25 & 113 & 0.000 & 10733 & 341 & 24.953 \\
				typeset.5881.246  & 248 & 0.986 & - & - & - &  &  [1258, 1877]  & 32.978 & 105414 & 318 &    &  [\underline{1305}, \underline{1700}]  & 23.235 & 44716 & 183 &   \\  
				Average  &    &    &  & 31.318 & 7848 &  &    & 17.907 & 197501 & 2966 & 20.811 &    & 16.951 & 101639 & 1699 & 404.801 \\
				\hline
			\end{tabular}
		}
	\end{table} 
\end{landscape}

\section{Conclusions}
\label{sec-conclusions}

This work introduces a more scalable model for the PCMCA, and three models for solving the PCMCA-WT. A proof of complexity shows that {the two problems fall inside the {{\sc NP}-hard} complexity class}. The experimental results show that the benchmark instances for the PCMCA could be solved much more efficiently compared to a previously proposed model. Moreover, the results show that the benchmark instances are much harder to solve under the PCMCA-WT settings. In general, large sized instances with dense precedence graphs are outside the reach of the three PCMCA-WT models proposed, therefore further studies to find better formulations for the problem are needed.

The two problems proposed have applications in designing distribution networks for commodities such as oil and gas, where the network is designed in such a way {to avoid} passing through a certain location when delivering the commodity to another location. Moreover, the delivery can be scheduled to reach certain locations with higher priority first. The Sequential Ordering Problem is a related problem with a wide range of applications in the domains of scheduling and logistics. {The two problems proposed can be seen as relaxations of the Sequential Ordering Problem. {Therefore, lower bounds for the SOP based on them (or their linear relaxations) could be derived, together with some new valid inequalities.} This in turn could have impact on solving the several real-world problems that can be formulated as a Sequential Ordering Problem.}

\section*{Acknowledgment}
We would like to thank the anonymous reviewers for their constructive comments and suggestions.}





\appendix

\section{{The \emph{Path-Based} model from \cite{ref-dellamico} fails to detect the violation of precedence constraints in fractional solutions}}\label{a1}

The MILP model previously proposed for the PCMCA in \cite{ref-dellamico}, imposes the precedence relationships by propagating a value along every path with end-points $s$ and $t$ for $(s,t)\in R$ in order to detect a precedence violation. The violation is detected if a non-zero value is propagated from $t$ down to $s$. The model sometimes fail to detect a violating path in a fractional solution due to the diminishing propagated values along that path \cite{ref-dellamico}. However, the \textit{Set-Based Model} proposed in Section 2.2 is able to detect the violation for the same fractional solution. In order to show this, consider the example
shown in Figure \ref{fig-p_vs_s}. Referring back to Algorithm 1, we construct the graph $D_j$ (consisting of all the vertices of the original graph $G$ excluding the successors of vertex $s$ in the precedence graph $P$), then we compute a minimum $(r, s)$-cut in $D_j$, namely cut $[\{s, 1, 2\}, \{r\}]$, which has a value less than 1 in the example. On the other hand in the model proposed in \cite{ref-dellamico} a value of 0 will be propagated down from $t$ to $s$ failing to detect the violating path. The details about how the value is propagated can be found in \cite{ref-dellamico}.

\begin{figure}[H]
	\centering
	\hspace{-4mm}%
	\begin{subfigure}{.3\textwidth}
		\centering
		\begin{tikzpicture}[node distance={2cm}, main/.style = {draw, circle},scale=0.9, transform shape]
			\node[main] (0) {$r$}; 
			\node[main] (1) [below of=0] {$t$};
			\node[main] (2) [below right of=1] {1};
			\node[main] (3) [below left of=1] {2};
			\node[main] (4) [below right of=3] {$s$};
			
			\path[->,>=stealth, line width=0.6] (0) edge [auto=left] node {1.0} (1);
			\path[->,>=stealth, line width=0.6] (0) edge [bend right, auto=right] node {0.5} (3);
			\path[->,>=stealth, line width=0.6] (1) edge [auto=left] node {0.5} (2);
			\path[->,>=stealth, line width=0.6] (1) edge [auto=right] node {0.5} (3);
			\path[->,>=stealth, line width=0.6] (3) edge [auto=left] node {0.5} (2);
			\path[->,>=stealth, line width=0.6] (3) edge [auto=right] node {0.5} (4);
			\path[->,>=stealth, line width=0.6] (2) edge [auto=left] node {0.5} (4);
			
			\path[dashed, ->,>=stealth, color=red, line width=0.6] (4) edge [bend right, auto=right] node {} (1);
		\end{tikzpicture}
	\end{subfigure}\hspace{2mm}%
	\begin{subfigure}{.3\textwidth}
		\centering
		\begin{tikzpicture}[node distance={2cm}, main/.style = {draw, circle},scale=0.9, transform shape]
			\node[main] (0) {$r$}; 
			\node[main, label=right:\textcolor{blue}{1}] (1) [below of=0] {$t$};
			\node[main, label=right:\textcolor{blue}{0.5}] (2) [below right of=1] {1};
			\node[main, label=left:\textcolor{blue}{0.5}] (3) [below left of=1] {2};
			\node[main, label=right:\textcolor{blue}{0}] (4) [below right of=3] {$s$};
			
			\path[->,>=stealth, line width=0.6] (0) edge [auto=left] node {1.0} (1);
			\path[->,>=stealth, line width=0.6] (0) edge [bend right, auto=right] node {0.5} (3);
			\path[->,>=stealth, line width=0.6] (1) edge [auto=left] node {0.5} (2);
			\path[->,>=stealth, line width=0.6] (1) edge [auto=right] node {0.5} (3);
			\path[->,>=stealth, line width=0.6] (3) edge [auto=left] node {0.5} (2);
			\path[->,>=stealth, line width=0.6] (3) edge [auto=right] node {0.5} (4);
			\path[->,>=stealth, line width=0.6] (2) edge [auto=left] node {0.5} (4);
			
			\path[dashed, ->,>=stealth, color=red, line width=0.6] (4) edge [bend right, auto=right] node {} (1);
		\end{tikzpicture}
	\end{subfigure}\hspace{7mm}%
	\begin{subfigure}{.3\textwidth}
		\centering
		\begin{tikzpicture}[node distance={2cm}, main/.style = {draw, circle},scale=0.9, transform shape]
			\node[main] (0) {$r$}; 
			\node[main] (2) [below right of=1] {1};
			\node[main] (3) [below left of=1] {2};
			\node[main] (4) [below right of=3] {$s$};
			
			\path[->,>=stealth, line width=0.6] (0) edge [bend right, auto=right] node {0.5} (3);
			\path[->,>=stealth, line width=0.6] (3) edge [auto=left] node {0.5} (2);
			\path[->,>=stealth, line width=0.6] (3) edge [auto=right] node {0.5} (4);
			\path[->,>=stealth, line width=0.6] (2) edge [auto=left] node {0.5} (4);
			
			\draw [dashed,red, line width=0.6] (2,-3.2) to[out=90,in=90] (-2, -3.2);
		\end{tikzpicture}
	\end{subfigure}
	\caption{Example of a fractional solution that violates the precedence relationship $(s,t) \in R$, and how the violation cannot be detected by the \textit{Path-Based Model}. The figure on the left shows the fractional solution with the violating path from $t$ to $s$. The figure in the middle shows the value propagated from $t$ to $s$ next to each vertex, where the violation is not detected using the \textit{Path-Based Model} because a value of 0 is propagated down to vertex $s$ from $t$. The figure on the right shows how the \textit{Set-Based Model} is able to detect the violating path, since using Algorithm 1 the graph has a minimum $(r,s)$-cut in $D_j$ with value less than 1.}
	\label{fig-p_vs_s}
\end{figure}

\bibliographystyle{apa-good}
\bibliography{ref}

\end{document}